\newtheorem{theorem}{Theorem}[section]
\newtheorem{lemma}[theorem]{Lemma}
\newtheorem{proposition}[theorem]{Proposition}
\newtheorem{corollary}[theorem]{Corollary}
\theoremstyle{definition}
\newtheorem{definition}[theorem]{Definition}
\newtheorem{example}[theorem]{Example}
\newtheorem{conjecture}[theorem]{Conjecture}
\newtheorem{question}[theorem]{Question}
\theoremstyle{remark}
\numberwithin{equation}{section}
\newtheorem*{thm}{\bf{Theorem}}
\newtheorem*{remark}{\bf{Remark}}
\newtheorem*{acknowledgement}{\bf{Acknowledgement}}
\newtheorem*{ThmA}{\bf{Theorem A}}
\newtheorem*{ThmB}{\bf{Theorem B}}
\begin{document}
\title[Spectral dynamics for the infinite dihedral group and the lamplighter group]%
{Spectral dynamics for the infinite dihedral group and the lamplighter group }
\author{Chao Zu}
\address{Department of Mathematics Sciences, Dalian University of Technology,
Dalian, Liaoning, 116024, P. R. China}
\email{zuchao@mail.dlut.edu.cn}

\author{Yixin Yang}
\address{Department of Mathematics Sciences, Dalian University of Technology,
Dalian, Liaoning, 116024, P. R. China}
\email{yangyixin@dlut.edu.cn}

\author{Yufeng Lu}
\address{Department of Mathematics Sciences, Dalian University of Technology,
Dalian, Liaoning, 116024, P. R. China}
\email{lyfdlut@dlut.edu.cn}

\subjclass[2010]{Primary 43A65 47A13; Secondary 37F10}

\thanks{This research is supported by National Nature Science Foundation of China
(No. 12031002,
11971086).  This research is also partially supported by Dalian High-level Talent innovation Project (Grant 2020RD09) and  Scientific Research Innovation Team of DUT (DUT2021TB07).}
\begin{abstract}
 For a tuple $A=(A_0,A_1,\cdots,A_n)$ of elements in a Banach algebra $\mathfrak{B}$, its projective (joint) spectrum $p(A)$ is the collection of $z\in \mathbb{P}^n$ such that $A(z)=z_0A_0+z_1A_1+\cdots+z_nA_n$ is not invertible. If $\mathfrak{B}$ is the group $C^*$-algebra for a discrete group $G$ generated by $A_0, A_1,\dots, A_n$ with a representation $\rho$, then $p(A)$ is an invariant of (weak) equivalence for $\rho$. In \cite{BY}, B. Goldberg and R. Yang proved that the Julia set $\mathcal{J}(F)$ of the induced rational map $F$ for the infinite dihedral group $D_\infty$ is the union of the projective spectrum with the extended indeterminacy set. But the extended indeterminacy set $E_F$ is complicated. To obtain a better relationship between the projective spectrum and the Julia set, by replacing $A_\pi(z)=z_0+z_1\pi(a)+z_2\pi(t)$ with the extended pencil $A_\pi(z)=z_0+z_1\pi(a)+z_2\pi(t)+z_3\pi(at)$, where $\pi$ is the Koopman representation, and using the method of operator recursions, we show that $p(A_\pi)=\mathcal{J}(F).$ Further, we  study the spectral dynamics for the Lamplighter group $\mathcal{L}$, and prove that $\mathcal{J}(Q)=E_Q$, where $Q$ is the rational map associated with $\mathcal{L}$.

\end{abstract}
\keywords{projective spectrum,  infinite dihedral group,
lamplighter group, indeterminacy set, Fatou set, Julia set.}

\maketitle
\section{Introduction}

Let $\mathcal{B}$ be a complex unital Banach algebra and $A = (A_0 ,A_1 ,\cdots ,A_n )$ be a tuple
of linearly independent elements in $\mathcal{B}$. The multiparameter pencil
$A(z) := z_0 A_0 + z_1 A_1 + \cdots + z_n A_n$
is an important subject of study in numerous fields. The notion of projective (joint)
spectrum was introduced by R. Yang in \cite{R.Yang} as follows.
\begin{definition}\label{1.1}
 For a tuple $(A_0 ,A_1 ,\cdots, A_n )$ of elements in a unital Banach algebra
$\mathcal{B}$, its \textit{projective (joint) spectrum} is defined as
$$P(A) = \{ z\in \mathbb{C}^{n+1} ~|~ A(z) ~~is ~~not ~~invertible \}.$$
The projective resolvent set refers to the complement, $P^c(A) = \mathbb{C}^{n+1}\setminus P(A)$.

\end{definition}

The projective spectrum reveals the joint behaviors of $A_0 ,A_1 ,\cdots ,A_n$ as well as interactions among them, and  properties of the projective spectrum have been previously investigated
in a series of papers, such as \cite{BPY,PY,CSZ,DY1,GY,HWY} and  references therein.  One interesting case is when the tuple $A$ is
associated with a group $G$ which has finite generating set $S = \{g_1 , g_2 ,\cdots, g_s \}$ and a unitary representation $\pi$ of $G$ on a complex Hilbert space $\mathcal{H}$. Then the projective spectrum encapsulates information about $G$ as well as some subtler information about the representation. For instance, for the multiparameter pencil
$A_\pi (z) = z_0 I + z_1 \pi(g_1 ) + \cdots + z_s \pi(g_s )$, it was shown in \cite{R.GY}  that if two representations $\pi$ and $\rho$ are weakly equivalent, then $P(A_\pi )=P(A_\rho )$.
Another important study of spectral theory on groups is the group $\mathcal{G}$ of intermediate growth, which was done by Grigorchuk
and his collaborators (cf. \cite{GN,GNS,GNZ,GS}). It was discovered that $\mathcal{G}$ has a self-similar representation on the rooted binary tree, and this self-similarity induces
a rational map on a certain spectral set of $\mathcal{G}$ whose dynamical properties link tightly to spectral properties of $\mathcal{G}$, which is called \textit{spectral dynamics} (cf. \cite{BY}). This paper aims to investigate the spectral dynamics from the view point of projective spectrum, and we will focus on the the infinite dihedral group $D_\infty$ and the lamplighter group $\mathcal{L}$.

Let $\mathbb{P}^n$ be the complex projective space of dimension $n$, and $\phi$ be the canonical projection   from  $\mathbb{C}^{n+1}$ onto $\mathbb{P}^n$. Let $p(A)=\phi(P(A))$ and $p^c(A)=\phi(P^c(A))$, that is,
\[   p(A) = \{ z\in \mathbb{P}^{n} ~|~ A(z) ~~is ~~not ~~invertible \}.   \]
For $D_\infty=<a,t~|~ a^2=t^2=1>$, B. Goldberg and R. Yang considered the pencil $A_\pi(z)=z_0I+z_1\pi(a)+z_2\pi(t)$ in \cite{BY}, where $\pi$ is the Koopman representation of $D_\infty$. By the self-similarity of $\pi$ and a Schur complement argument, they defined a polynomial map $F$ from $\mathds{P}^2$ into $\mathds{P}^2$, which preserves the projective spectrum $p(A_\pi)$. Through studying the dynamical properties of $F$, they showed that $p(A_\pi)\bigcup E_F=\mathcal{J}(F)$, where
\[ p(A_\pi)=\bigcup_{x\in [-1,1]}\{z\in \mathbb{P}^2 : z_0^2-z_1^2-z_2^2-2xz_1z_2=0\}. \]
 However, compare to the concise expression of $p(A_\pi)$, the extended indeterminacy set $E_F$ is complicated. In this paper, we consider the extended pencil $A_\pi(z)=z_0I+z_1\pi(a)+z_2\pi(t)+z_3\pi(at)$. By the operator recursions induced by the self-similar representation $\pi$,  we obtain a polynomial map $F: \mathds{P}^3\rightarrow \mathds{P}^3$, and prove that the projective spectrum $p(A_\pi)$ coincides with the Julia set $\mathcal{J}(F)$.

\begin{ThmA}
For the pencil $A_\pi(z)=z_0+z_1\pi(a)+z_2\pi(t)+z_3\pi(at)$,
%
\[F(z)=[~z_0^2-z_1^2~:~z_0z_2-z_1z_3~:~ z_0z_2-z_1z_3~:~z_2^2-z_3^2~],\]
and
\begin{equation*}
  \mathcal{J}(F)=\bigcup_{x\in [-1,1]}\{z\in \mathbb{P}^3 : z_0^2+z_3^2-z_1^2-z_2^2+2x(z_0z_3-z_1z_2)=0\}=p(A_\pi).
\end{equation*}

\end{ThmA}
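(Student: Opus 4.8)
The plan is to prove the statement in three stages: derive the map $F$ together with the invariance of $p(A_\pi)$ from the operator recursion, then compute $p(A_\pi)$ explicitly, and finally identify $\mathcal{J}(F)$ through a one-variable semiconjugacy. \emph{Stage 1 (operator recursion and the formula for $F$).} Using the self-similar form of the Koopman representation, write $\mathcal{H}=\mathcal{H}_0\oplus\mathcal{H}_1$ with $\mathcal{H}_0\cong\mathcal{H}_1\cong\mathcal{H}$ and, with $a=\pi(a)$, $t=\pi(t)$,
$$\pi(a)=\begin{pmatrix} 0 & I \\ I & 0 \end{pmatrix},\qquad \pi(t)=\begin{pmatrix} a & 0 \\ 0 & t \end{pmatrix},\qquad \pi(at)=\begin{pmatrix} 0 & t \\ a & 0 \end{pmatrix},$$
so that the pencil takes the block form
$$A_\pi(z)=\begin{pmatrix} z_0+z_2 a & z_1+z_3 t \\ z_1+z_3 a & z_0+z_2 t \end{pmatrix}.$$
Since $a^2=I$, the $(1,1)$ block is invertible exactly when $z_0^2\neq z_2^2$, and a direct Schur-complement computation yields
$$(z_0+z_2 t)-(z_1+z_3 a)(z_0+z_2 a)^{-1}(z_1+z_3 t)=\frac{1}{z_0^2-z_2^2}\,(z_0-z_2 a)\,A_\pi\bigl(F(z)\bigr),$$
with $F$ exactly the stated map. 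As $z_0-z_2 a$ is invertible under the same condition $z_0^2\neq z_2^2$, this factorization shows that, off $\{z_0^2=z_2^2\}$, $A_\pi(z)$ is invertible if and only if $A_\pi(F(z))$ is; hence $z\in p(A_\pi)\iff F(z)\in p(A_\pi)$ there. This is the operator-theoretic origin of $F$ and already yields essential complete invariance of $p(A_\pi)$.

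\emph{Stage 2 (the explicit form of $p(A_\pi)$).} Decompose the Koopman representation as a direct integral of the two-dimensional irreducibles $\pi_x$ of $D_\infty$, $x=\cos 2\theta\in[-1,1]$, in which $a,t$ act as two reflections and $at$ as the rotation by $2\theta$. A short determinant computation gives
$$\det A_{\pi_x}(z)=z_0^2+z_3^2-z_1^2-z_2^2+2x(z_0z_3-z_1z_2)=:q_x(z).$$
Because the associated spectral measure has full support on $[-1,1]$ and $q_x(z)$ is jointly continuous, $A_\pi(z)$ fails to be invertible precisely when $q_x(z)=0$ for some $x\in[-1,1]$. Therefore $p(A_\pi)=\bigcup_{x\in[-1,1]}\{z:q_x(z)=0\}$, which is the middle set of Theorem~A.

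\emph{Stage 3 (identification with the Julia set).} Put $u(z)=z_0^2+z_3^2-z_1^2-z_2^2$ and $v(z)=2(z_0z_3-z_1z_2)$, so $q_x=u+xv$ and $p(A_\pi)=\Phi^{-1}([-1,1])$ for the rational map $\Phi=[\,u:v\,]$ from $\mathbb{P}^3$ to $\mathbb{P}^1$, under the affine coordinate $x=-u/v$. A direct substitution gives the key identities
$$u\circ F=u^2-\tfrac12 v^2,\qquad v\circ F=-\tfrac12 v^2,$$
whence $x\circ F=2x^2-1=T_2(x)$, the degree-two Chebyshev polynomial. Thus $\Phi$ semiconjugates $F$ to $T_2$, whose Julia set is exactly $[-1,1]$ and whose Fatou set is the basin of the superattracting fixed point $\infty$. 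Since $T_2^{-1}([-1,1])=[-1,1]$, complete invariance of $p(A_\pi)$ is immediate, and one expects $\mathcal{J}(F)=\Phi^{-1}(\mathcal{J}(T_2))=\Phi^{-1}([-1,1])=p(A_\pi)$.

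The main obstacle is precisely this last transfer of the Julia set through the semiconjugacy, and two points require care. First, $\Phi$ is only rational, with indeterminacy locus $\{u=v=0\}$; one must check that this locus, as well as the indeterminacy locus of $F$ itself, lies inside $p(A_\pi)$ — indeed $u=v=0$ forces $q_x\equiv 0$, so such points lie on $\{q_x=0\}$ for every $x$ — and this is exactly the improvement over the original pencil that removes the extraneous set $E_F$. Second, because the fixed points of $F$ in $p^c(A_\pi)$ (for instance $[1:0:0:0]$) are degenerate/parabolic, the inclusion $p^c(A_\pi)\subseteq\mathcal{F}(F)$ cannot come from a naive attracting-basin argument; instead I would establish normality of $\{F^{\circ n}\}$ on $p^c(A_\pi)$ from the relation $T_2^{\circ n}\circ\Phi=\Phi\circ F^{\circ n}$, which drives the $\Phi$-coordinate locally uniformly to $\infty$ (so orbits concentrate on the invariant set $\{v=0\}$ and stay clear of the indeterminacy locus), together with control of the dynamics transverse to the $\Phi$-fibration. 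The reverse inclusion $p(A_\pi)\subseteq\mathcal{J}(F)$ then follows from complete invariance and the identity $p(A_\pi)=\partial\,p^c(A_\pi)$, using that repelling cycles of $T_2$ are dense in $[-1,1]$ and pull back to non-normality along $p(A_\pi)$.
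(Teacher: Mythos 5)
Your Stages 1 and 2 are sound and essentially reproduce the paper's setup. The Schur-complement identity you write is correct (the paper reaches the same $F$ slightly differently, through the finite-dimensional determinant recursion $\det A_{\pi_{n+1}}(z)=\det A_{\pi_n}(F(z))$ obtained from $\det\left(\begin{smallmatrix}A&B\\C&D\end{smallmatrix}\right)=\det(AD-CB)$ for commuting $A,C$, which avoids the invertibility caveat on the $(1,1)$ block). Your identification of $p(A_\pi)$ with $\bigcup_{x\in[-1,1]}\{q_x=0\}$ is what the paper gets from the weak equivalence $\pi\sim\lambda$ of Grigorchuk--Yang together with the direct-integral form of $\lambda$; your appeal to ``full support of the spectral measure'' of the Koopman representation is really that theorem and should be cited rather than asserted, but it is not a wrong step. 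The semiconjugacy $\tau\circ F=T\circ\tau$ with $T(x)=2x^2-1$ is exactly the paper's Proposition 3.4, with the same $u,v$ identities.

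The genuine gap is the one you yourself flag in Stage 3 and then leave as a black box: ``control of the dynamics transverse to the $\Phi$-fibration.'' The semiconjugacy governs only one coordinate of a three-dimensional system; knowing $T^n(\tau(z))\to\infty$ locally uniformly on $p^c(A_\pi)$ does not by itself make $\{F^n\}$ normal there, since the orbits could a priori move wildly inside the two-dimensional fibres of $\Phi$. The paper closes this with Proposition 3.5, an explicit induction giving
\[ F^{n+2}(z)=\Bigl[\tfrac{z_0^2-z_1^2}{z_1z_2-z_0z_3}-f_n\;:\;\tfrac{z_0z_2-z_1z_3}{z_1z_2-z_0z_3}\;:\;\tfrac{z_0z_2-z_1z_3}{z_1z_2-z_0z_3}\;:\;\tfrac{z_2^2-z_3^2}{z_1z_2-z_0z_3}+f_n\Bigr],\qquad f_n=\sum_{k=1}^{n+1}\frac{1}{2^k\prod_{j=0}^{k-1}T^j(\tau(z))}, \]
so the transverse coordinates are literally constant in $n$ and all of the dynamics is carried by the scalar series $f_n$, which converges normally exactly where $T^j(\tau(z))\to\infty$. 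This formula is the technical heart of the theorem and is absent from your proposal. It also rescues the reverse inclusion: your plan to ``pull back non-normality'' through $\Phi$ breaks down when a locally uniform limit of some $F^{n_k}$ takes values in the indeterminacy set $E=\{u=v=0\}$ of $\Phi$ (which sits inside $p(A_\pi)$, so this case cannot be excluded a priori); the paper instead writes $f_n=\cos w-\sin w\cot(2^{n+1}w)$ for $\tau(z)=\cos w$ with $w/\pi$ non-dyadic and uses density of $\{2^kw \bmod \pi\}$ to show directly that no subsequence of $\{f_n\}$ converges near such points, then concludes by density and closedness of the Julia set.
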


The lamplighter group $\mathcal{L}$ is the group
$$(\bigoplus_{\mathbb{Z}}\mathbb{Z}/2\mathbb{Z})\rtimes \mathbb{Z}(= \mathbb{Z}/2\mathbb{Z} \wr \mathbb{Z} ).$$
It is generated by two elements $u, v$, where $u$ is the generator of $\mathbb{Z}$ in the semi-direct product, and
\[v=(\cdots,0,0,1,0,0,\cdots)\in \bigoplus_{\mathbb{Z}}\mathbb{Z}/2\mathbb{Z}\]
with $1$ is in zeroth position.
In \cite{GZ}, R. Grigorchuk and A. Zuk proved that the lamplighter group can be realized as  a 2-state automaton group $G:=<a, b>,$  where $a=(a,b)\sigma, b=(a,b)$ and $\sigma$ is the generator of the symmetric group of order $2$ , via the isomorphism
\begin{align*}
  a & \rightarrow  u \\
  c=b^{-1} a & \rightarrow  v.
\end{align*}

For the Markov operator $M$ associated with a group and a generating set, the spectral measure $\mu$ of $M$ can be decomposed into absolutely continuous, singular continuous, and discrete components, i.e. $\mu=\mu_{ac}+\mu_{sc}+\mu_d$. A spectrum point $x$ is called a pure point if it is only contained in the support of discrete components, that  is, there is a neighborhood $N_x$ of $x$ such that $\mu_d(N_x)>0, \mu_{ac}(N_x)=\mu_{sc}(N_x)=0$.
For the lamplighter group and generating set $\{a,b\}$,  R. Grigorchuk and A. Zuk \cite{GZ} showed that the spectrum of the associated Markov operator $M$ are all pure points.   This is the first example of a group and generating set with pure point spectrum of a Markov operator.  In \cite{GBS}, by a careful study of spectral properties of a one-parametric family $a+a^{-1}+b+b^{-1}-\mu c$ of convolution operators on $\mathcal{L}$, where $\mu$ is a real parameter, R. Grigorchuk and B. Simanek showed  that the lamplighter group $\mathcal{L}$ has a system of generators for which the spectrum of the discrete Laplacian on the Cayley graph is a union of an interval
and a countable set of isolated points accumulating to a point outside this interval. This is the first example of a group with infinitely many gaps in the spectrum of Cayley graph. These interesting examples lead us to study the spectral dynamics of the the lamplighter group $\mathcal{L}$.
\begin{ThmB}
For the lamplighter group $\mathcal{L}$ and the pencil $D_\pi(z)=z_0+z_1\pi(c)+z_2(\pi(a)+\pi(b))+z_3(\pi(a^{-1})+\pi(b^{-1}))$, the associated operator recursion induces a rational map on $\mathds{P}^3$
$$Q(z)=[~z_0+z_1-\frac{2z_2z_3}{z_0-z_1}~:~ \frac{2z_2z_3}{z_0-z_1}~:~ z_2~:~z_3~].$$
For the rational map $Q$, the Julia set $\mathcal{J}(Q)$ coincides with the extended indeterminacy set $E$,
\begin{equation*}
  \mathcal{J}(Q)=E= \{~(z_0-z_1)z_1=2z_2z_3~\}\bigcup \left( \bigcup_{x\in [0,1]} \{~(z_0+z_1)^2-16xz_2z_3=0 ~\}\right) \bigcup \left( \bigcup_{n=0}^\infty \Gamma_n\right),
\end{equation*}
where $\Gamma_n$ is the curve $  ~\{ ~   (z_0-z_1)U_n(\tau(z))=\sqrt{4z_2z_3}U_{n-1}(\tau(z))~\} $ with $\tau(z)=\frac{z_0+z_1}{2\sqrt{4z_2z_3}}$ and $U_n$ is the $n$-th Chebyshev polynomial of the second kind. Moreover, the Julia set $\mathcal{J}(Q)$ is properly contained in the projective spectrum $p(D_\pi)$,
\begin{equation*}
   \phi(L) \cup \mathcal{J}(Q)\subseteq p(D_\pi) ,
\end{equation*}
where $L$ is the hyperplane $\{z_0+z_1+2z_2+2z_3=0 \}$ and $\phi$ is the canonical projection.

\end{ThmB}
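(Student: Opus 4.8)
The plan is to follow the template of Theorem A, first producing $Q$ by an operator recursion and then analysing it as a fibered one-dimensional dynamical system. From the automaton relations $b=(a,b)$ and $a=(a,b)\sigma$ one computes $c=b^{-1}a=\sigma$, so that $\pi(c)$ is the coordinate swap on $\mathcal{H}\oplus\mathcal{H}$ and each of $\pi(b),\pi(a),\pi(b^{-1}),\pi(a^{-1})$ has an explicit $2\times2$ block form. Substituting these into $D_\pi(z)$ exhibits the pencil as a $2\times2$ block operator matrix whose entries are affine in $z$ and in the subtree copies of the generators. I would then eliminate one block by a Schur complement; the complement equals a scalar multiple of $D_\pi$ evaluated at a transformed point, and reading off that point yields $Q$, the scalar factor being responsible for the denominator $z_0-z_1$.

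The organising observation is that $Q$ preserves the three coordinates $z_0+z_1$, $z_2$, $z_3$ and acts on $d:=z_0-z_1$ by the M\"obius transformation $d\mapsto(z_0+z_1)-4z_2z_3/d$, with matrix $\left(\begin{smallmatrix} z_0+z_1 & -4z_2z_3 \\ 1 & 0\end{smallmatrix}\right)$. Thus $Q$ is a family of M\"obius systems fibered over $[\,z_0+z_1:z_2:z_3\,]$, whose type is governed by $\tau(z)$: elliptic for $\tau^2\in[0,1)$, parabolic for $\tau^2=1$, and hyperbolic or loxodromic otherwise. Expanding the $n$-th power of this matrix by Cayley--Hamilton in the Chebyshev polynomials $U_n(\tau)$ gives $Q^n$ fiberwise; equating the numerator to zero recovers the $n$-th preimage of the collapsing hypersurface $\{z_0=z_1\}$ as exactly $\Gamma_n$ (with $\Gamma_0=\{z_0=z_1\}$). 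Since $\{z_0=z_1\}$ is carried onto the indeterminacy point $[-1:1:0:0]$, the union $\bigcup_n\Gamma_n$ is the iterated preimage of the indeterminacy set. A direct calculation identifies the fixed-point locus of $Q$ with $\{(z_0-z_1)z_1=2z_2z_3\}$, and a limiting analysis of the oscillatory ratios $U_{n-1}(\tau)/U_n(\tau)$ shows that the elliptic/parabolic locus $\bigcup_{x\in[0,1]}\{(z_0+z_1)^2=16xz_2z_3\}$ together with this fixed-point locus is precisely the accumulation set of $\bigcup_n\Gamma_n$; taking closures produces the stated $E$.

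To prove $\mathcal{J}(Q)=E$ I would show normality off $E$ and its failure on $E$. Off $E$ every fiber carries a hyperbolic or loxodromic map whose iterates converge locally uniformly to the attracting fixed point, and since the parameter is bounded away from the elliptic/parabolic locus these limits depend continuously on $z$; a uniform estimate then places $\mathbb{P}^3\setminus E$ in the Fatou set $\mathcal{F}(Q)$. Conversely, on $\bigcup_n\Gamma_n$ some iterate $Q^{n+1}$ is undefined; on the elliptic locus the fiber maps are rotations with varying rotation number on which the $\Gamma_n$ accumulate, so equicontinuity fails; and the repelling points inside the fixed-point locus are Julia points in the usual sense. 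Hence $E\subseteq\mathcal{J}(Q)$, giving equality.

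For $\phi(L)\cup\mathcal{J}(Q)\subseteq p(D_\pi)$, the constant function is a $\pi$-invariant vector carrying the trivial representation, on which $D_\pi(z)$ acts by the scalar $z_0+z_1+2z_2+2z_3$; its vanishing is exactly $L$, so $\phi(L)\subseteq p(D_\pi)$. For $E$, the recursion makes $p(D_\pi)$ invariant under $Q$ and shows it contains the indeterminacy locus, hence every $\Gamma_n$; the elliptic locus lies in $p(D_\pi)$ by the same spectral computation as in Theorem A (it is the continuous part of the spectrum of $D_\pi$); and closedness of $p(D_\pi)$ yields $E=\mathcal{J}(Q)\subseteq p(D_\pi)$. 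The inclusion is proper because $p(D_\pi)$ also contains the isolated pure points of the Markov operator found by Grigorchuk and Zuk, which sit in fibers where $Q$ is hyperbolic and therefore belong to $\mathcal{F}(Q)$. I expect the decisive difficulty to be the normality analysis across the hyperbolic--elliptic transition: proving that the elliptic and fixed-point loci are exactly (not merely contained in) the accumulation set of $\bigcup_n\Gamma_n$, and that equicontinuity genuinely fails on all of it, is where the parametrised, higher-dimensional dynamics departs from the one-variable model and where the bookkeeping with Chebyshev asymptotics must be done carefully.
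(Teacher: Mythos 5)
Your proposal follows essentially the same route as the paper: derive $Q$ from the block operator recursion via a Schur-complement/determinant argument, observe that $Q$ fixes $z_0+z_1,z_2,z_3$ and acts on $z_0-z_1$ by the M\"obius map with matrix $\left(\begin{smallmatrix} z_0+z_1 & -4z_2z_3 \\ 1 & 0\end{smallmatrix}\right)$, diagonalize the iteration with Chebyshev polynomials of the second kind to get the curves $\Gamma_n$ as the iterated preimages of $\{z_0=z_1\}$, identify the accumulation set of $\bigcup_n\Gamma_n$ by the limit $U_{n-1}(\tau)/U_n(\tau)\to\tau-\sqrt{\tau^2-1}$ off $[-1,1]$ and by a density-of-irrational-rotations argument on the elliptic locus, and then read off normality of $\{Q^n\}$ on the complement of $E$ from normal convergence of $G_{n+1}/G_n$. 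Your treatment of $\phi(L)\subseteq p(D_\pi)$ via the $\pi$-invariant constant vector is the same observation as the paper's $\det D_{\pi_0}(z)=z_0+z_1+2z_2+2z_3$, and your containment of $E$ in $p(D_\pi)$ matches the paper's identity $\overline{\bigcup_n F^{-n}(L)}=L\cup\phi^{-1}(E)$.

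The one genuine flaw is your argument for properness of the inclusion $\mathcal{J}(Q)\subseteq p(D_\pi)$. You propose to exhibit spectral points in the Fatou set by citing the isolated pure points of the Markov operator of Grigorchuk--\.{Z}uk and asserting they ``sit in fibers where $Q$ is hyperbolic.'' This does not work: the Markov operator corresponds to the slice $z_1=0$, $z_2=z_3$ of the pencil, on which $\tau(z)=\pm z_0/(2z_2)$, and its spectrum lies in the region $\tau(z)\in[-1,1]$, i.e.\ precisely in the elliptic locus, which is part of $E=\mathcal{J}(Q)$, not of $\mathcal{F}(Q)$. (The Grigorchuk--Simanek isolated points concern a different one-parameter family and would require a separate verification that they avoid $E$.) The paper instead gets properness for free from ingredients you already have: $\phi(L)\subseteq p(D_\pi)$, while $\phi(L)\not\subseteq E$ --- e.g.\ the point $[1:3:0:-2]$ lies on $L$ but, since $z_2=0$ forces $G_n(z)=(z_0-z_1)(z_0+z_1)^n\neq 0$ there, it lies on no $\Gamma_n$ and on none of the other components of $E$. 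Replacing your Markov-operator argument with this explicit point closes the gap; the rest of the proposal is sound and coincides with the paper's proof.
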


The reminder of the paper is organized as follows. In Section 2, we introduce some basic definitions and necessary concepts. In Section 3, we study the spectral dynamics of the infinite dihedral group $D_\infty$ and give the proof of Theorem A. In Section 4, we study the spectral dynamics of the Lamplighter group $\mathcal{L}$ and give the proof of Theorem B.

\section{Preliminaries}
\subsection{Projective spectrum of $D_\infty$ with respect to the left regular representation }

It is proved in \cite{R.GY} that the left regular representation $\lambda$ of $D_\infty$ is equivalent to the following  representation $\rho$ of $D_\infty$ on $L^2(\mathds{T},\frac{d\theta}{2\pi})\bigoplus L^2(\mathds{T},\frac{d\theta}{2\pi})$ defined by:
\begin{equation}\label{2.1}
 \rho(a)=\left(
                  \begin{array}{cc}
                    0 & I \\
                    I & 0 \\
                  \end{array}
                \right),~~ \rho(t)=\left(
                  \begin{array}{cc}
                    0 & T \\
                    T^* & 0 \\
                  \end{array}
                \right),
\end{equation}
where $I$ is the identity operator on $L^2(\mathds{T},\frac{d\theta}{2\pi})$, and $T : L^2(\mathds{T},\frac{d\theta}{2\pi})\rightarrow L^2(\mathds{T},\frac{d\theta}{2\pi})$ is
the bilateral shift, i.e., the unitary operator defined by $Tf(e^{i\theta} ) = e^{i\theta}f(e^{i\theta})$.

 Let
\begin{equation}\label{2.2}
  T=\int_0^{2\pi} e^{i\theta} dE(e^{i\theta})
\end{equation}
be the spectral decomposition of $T$, and $A_\rho(z)=z_0+z_1\rho(a)+z_2\rho(t)+z_3\rho(at)$, then combining together (\ref{2.1}) and (\ref{2.2}), we have
$$A_\rho(z)=\int_0^{2\pi} \left(
                                 \begin{array}{cc}
                                   z_0+z_3e^{i\theta} & z_1+z_2e^{i\theta} \\
                                   z_1+z_2e^{-i\theta} & z_0+z_3e^{-i\theta} \\
                                 \end{array}
                               \right)dE(e^{i\theta}).$$
Hence the pencil $A_\rho(z)$ is not invertible if and only if $\left(
                                 \begin{array}{cc}
                                   z_0+z_3e^{i\theta} & z_1+z_2e^{i\theta} \\
                                   z_1+z_2e^{-i\theta} & z_0+z_3e^{-i\theta} \\
                                 \end{array}
                               \right)$
is not invertible for at least one $\theta \in [0,2\pi)$. Thus we have
\begin{lemma}{\label{lemma2.1}}
 Let $\lambda : D_\infty \rightarrow U(l^2(D_\infty))$ be the left regular representation. Then
 $$P(A_\lambda)=P(A_\rho)=\bigcup_{x\in [-1,1]}\{z\in \mathbb{C}^4 : z_0^2+z_3^2-z_1^2-z_2^2+2x(z_0z_3-z_1z_2)=0\}.$$
\end{lemma}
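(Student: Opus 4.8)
The plan is to exploit the direct-integral presentation of $A_\rho(z)$ already recorded above, so that the invertibility of an operator on $L^2(\mathds{T})\oplus L^2(\mathds{T})$ is reduced to a pointwise determinant condition on a $2\times 2$ symbol. The first equality $P(A_\lambda)=P(A_\rho)$ requires no new work: by the result of \cite{R.GY} the representations $\lambda$ and $\rho$ are unitarily equivalent, say $\rho(g)=U\lambda(g)U^*$ for a fixed unitary $U$ and all $g\in D_\infty$. Since $U$ does not depend on $g$, we obtain $A_\rho(z)=UA_\lambda(z)U^*$ for every $z\in\mathbb{C}^4$, and conjugation by a fixed unitary preserves invertibility; hence $A_\rho(z)$ is invertible exactly when $A_\lambda(z)$ is, which gives $P(A_\lambda)=P(A_\rho)$.

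For the second equality I would start from the spectral integral
\[A_\rho(z)=\int_0^{2\pi} M_z(\theta)\, dE(e^{i\theta}),\qquad M_z(\theta)=\begin{pmatrix} z_0+z_3e^{i\theta} & z_1+z_2e^{i\theta}\\ z_1+z_2e^{-i\theta} & z_0+z_3e^{-i\theta}\end{pmatrix}.\]
Because $E$ is the spectral measure of the bilateral shift, which is equivalent to normalized arclength on $\mathds{T}$ and therefore has full support, and because $\theta\mapsto M_z(\theta)$ is norm-continuous, the decomposable operator $A_\rho(z)$ is invertible if and only if $M_z(\theta)$ is invertible for every $\theta\in[0,2\pi)$. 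Equivalently, $A_\rho(z)$ fails to be invertible precisely when $\det M_z(\theta)=0$ for at least one $\theta$. This is exactly the reduction already stated in the excerpt, so the remaining task is to read off the zero set of $\det M_z(\theta)$.

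A direct expansion gives
\[\det M_z(\theta)=(z_0+z_3e^{i\theta})(z_0+z_3e^{-i\theta})-(z_1+z_2e^{i\theta})(z_1+z_2e^{-i\theta}),\]
and after cancelling the cross terms and using $e^{i\theta}e^{-i\theta}=1$ this collapses to
\[\det M_z(\theta)=z_0^2+z_3^2-z_1^2-z_2^2+(e^{i\theta}+e^{-i\theta})(z_0z_3-z_1z_2).\]
Writing $x=\cos\theta=\tfrac12(e^{i\theta}+e^{-i\theta})$, the determinant becomes $z_0^2+z_3^2-z_1^2-z_2^2+2x(z_0z_3-z_1z_2)$. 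As $\theta$ runs over $[0,2\pi)$ the quantity $x$ ranges over the whole interval $[-1,1]$, so the condition ``$\det M_z(\theta)=0$ for some $\theta$'' is identical to ``$z_0^2+z_3^2-z_1^2-z_2^2+2x(z_0z_3-z_1z_2)=0$ for some $x\in[-1,1]$''. Combining the three steps yields the asserted formula.

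The only genuinely nontrivial point is the invertibility criterion for the direct integral in the second paragraph: one must argue that essential invertibility of the fibers, which is the correct condition for a decomposable operator, coincides here with invertibility at every $\theta$. This is precisely where the continuity of $M_z(\cdot)$ and the full support of the spectral measure enter, and since the excerpt already asserts this equivalence I expect no obstacle beyond making that observation explicit.
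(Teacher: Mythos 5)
Your proposal is correct and follows essentially the same route as the paper: the unitary equivalence $\lambda\sim\rho$ from \cite{R.GY}, the direct-integral presentation of $A_\rho(z)$ via the spectral measure of the bilateral shift, and the pointwise determinant computation $\det M_z(\theta)=z_0^2+z_3^2-z_1^2-z_2^2+2\cos\theta\,(z_0z_3-z_1z_2)$ with $\cos\theta$ sweeping $[-1,1]$. Your closing remark justifying why fiberwise invertibility suffices (norm-continuity of the symbol plus full support of the spectral measure) is a point the paper simply asserts, so your write-up is if anything slightly more complete.
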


\subsection{Regular rooted trees and self-similar groups}
Let $X$ be a finite set, usually called the alphabet, of size $k$. The set of all finite
words over $X$ is denoted by $X^*$. The set $X^*$ can be naturally equipped with the
structure of a  $k$-regular rooted tree as follows. The vertices of the tree are the words
in $X^*$, the root is the empty word, the level $n$ is the set $X^n$ of words of length $n$ over $X$, and the children of each vertex $u\in X^*$ are the $k$ vertices of the form $ux$, for $x\in X$. We use $X^*$ to denote the set of finite words over X, the set of vertices of the rooted tree we just described, as well as the tree itself.

The group $Aut(X^*)$ of all automorphisms of the  k-regular rooted tree $X^*$
preserves the root and all levels of the tree. Every automorphism $g\in Aut(X^*)$
induces a permutation $\alpha_g$ of $X$, defined by $\alpha_g(x)=g(x)$, called the root permutation of $g$. It represents the action of $g$ at the first letter in each word. For every automorphism $g\in Aut(X^*)$ and every vertex $u\in X^*$, there exists a unique tree automorphism of $X^*$ , denoted by $g_u$ , such that, for all words $w\in X^*$,
$$g(uw)=g(u)g_u(w).$$
The automorphism $g_u$ is called the section of $g$ at $u$: It represents the action of $g$ on
the tails of words that start with $u$. Every automorphism $g$ is uniquely determined
by its root permutation $\alpha_g$ and the $k$ sections at the first level $g_x$ , for $x\in X$. Indeed,
for every $x\in X$ and $w\in X^*$ we have
$$g(xw)=\alpha_g(x)g_x(w).$$

When $X=\{0,1,\cdots,k-1\}$, a succinct representation, called wreath recursion,
of the automorphism $g\in Aut(X^*)$, describing its root permutation and its first level
sections is given by
$$g=\alpha_g(g_0,g_1,\cdots,g_{k-1}).$$
In addition of being short and clear, it has many other advantages, not the
least of which is that it emphasizes the fact that $Aut(X^*)$ is isomorphic to the
semidirect product $Sym(X)\ltimes (Aut(X^*))^X$, that is, to the permutational wreath
product $Sym(X)\wr_X Aut(X^*)$, where $Sym(X)$ is the group of all permutations of $X$.

A set $S\subset Aut(X^*)$ of tree automorphisms is \textit{self-similar} if it is closed under
taking sections, that is, every section of every element of $S$ is itself in the set $S$.
Thus, for every word $u$, the action of every automorphism $s\in S$ on the tails of
words that start with $u$ looks exactly like the action of some element of $S$. Note that
for a set $S$ to be self-similar it is sufficient that it contains the first level sections of
all of its elements. Indeed, this is because $g_{uv}=(g_u)_v$ , for all words $u,v\in X^*$.

A group $G\leq Aut(X^*)$ of tree automorphisms is \textit{self-similar} if it is self-similar as a set. Every group generated by a self-similar set is itself self-similar. This is because
¡°sections of the product are products of sections¡± and ¡°sections of the inverse are
inverses of sections¡±. To be precise, for all tree automorphisms $g$ and $h$ and all words $u\in X^*$,
\[(gh)_u=g_{h(u)}h_u~~,~~~~~ (g^{-1})_u=(g_{g^{-1}(u)})^{-1}.\]

\subsection{Self-similar representation and weak equivalence}

 For a countable group $G$ and a unitary representation $\rho : G \rightarrow U(\mathcal{H})$,
where $U(\mathcal{H})$ denotes the group of unitary operators on the Hilbert space $\mathcal{H}$.
 The representation $\rho$ is called \textit{self-similar}, or more precisely \textit{$d$-similar}, if there exists
a natural number $d$ and a unitary map $W : \mathcal{H} \rightarrow \mathcal{H}^d$ such that for every $g\in G$, the
$d \times d$ block matrix $X(g) := W\rho(g)W^*$ has all of its entries $x_{ij}$ either equal to 0 or
from $\rho(G)$. Since $X(g)$ itself and each of its nonzero entry
$x_{ij}$ are unitaries, every row or column of $X(g)$ has precisely one nonzero entry.

Self-similar representations always arise when the self-similar group $G$ acts on a $d$-regular rooted tree. An account of the theory
of self-similar group can be found in the surveys \cite{GNS,GNZ,GN} and
in Nekrashevych's book \cite{V.N}. Fix $d \geq 2$ an integer and let $T = T_d$ be the $d$-regular rooted tree whose vertices
are in bijection with finite words (strings) over an alphabet of cardinality $d$ (a standard choice for $A$ is $\{0,\cdots,d-1\}$, see Figure 1 below).
\begin{figure}[htb]

\centering

\includegraphics[height=6.0cm,width=9.5cm]{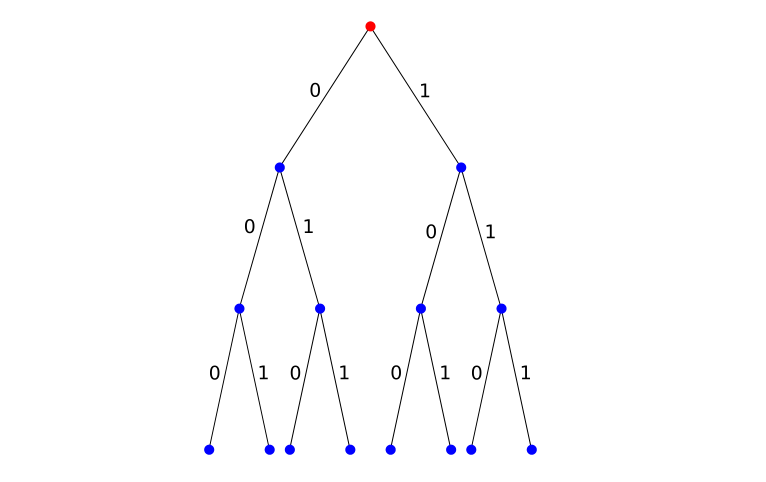}
\caption{2-regular rooted  tree}
\end{figure}
The boundary $\partial T$ of the tree consisting of geodesic paths jointing the root vertex to infinity has a natural topology which makes it homeomorphic to the Cantor set. The mean measure $\mu$ on $\partial T$ is defined by $\mu(\partial  T_{i_1\cdots i_p} ) =\frac{1}{d^p}$, where $i_k\in \{0,\cdots,d-1\}$ for each $k$. In other words, the measure $\mu$ distributes evenly on the subtrees at every level.
A self-similar group naturally acts on a  $d$-regular rooted tree and this
action respects the self-similar structure of the tree. Namely, for each element
$g\in G$ and a vertex $v\in V(T)$, the restriction $g_v$ of $g$ on the subtree $T_v$ rooted
at $v$ can be identified with an element of $G$ (using the canonical identification of $T_v$ with $T$). Define the Hilbert space $\mathcal{H} = L^2 (\partial T,\mu)$. Let $\mu_i = d\cdot \mu ,~ i = 0, 1,\cdots, d-1$ be the normalized restrictions of $\mu$ on the boundary of the subtrees $\partial T_0 , \partial T_1,\cdots, \partial T_{d-1} $, and define $\mathcal{H}_i = L^2 (\partial T_i , \mu_i ), i = 0,1,\cdots, d-1$. Then each $\mathcal{H}_i$ can be identified with $\mathcal{H}$ and hence $\mathcal{H} = \mathcal{H}_0 \bigoplus  \mathcal{H}_1\bigoplus\cdots\bigoplus \mathcal{H}_{d-1}$ can be identified with $\mathcal{H}^d$ by a unitary $W$.

If a locally compact group $G$ has a measure-preserving action on a measure space
$(X,\mu)$, the \textit{Koopman representation} $\pi : G \rightarrow U(L^2(X,\mu))$
is defined by
$$\pi(g)f(x) = f(g^{-1} x), ~~~\forall x\in  X,~~ \forall g \in G.$$
In particular, if $X=G$ and $\mu$ is the Haar measure, then the Koopman representation is exactly the left regular representation of $G$.

Clearly, for a self-similar group $G$, its natural action on $\partial T$  preserves the mean measure $\mu$ on $\partial T$, hence its Koopman representation $\pi : G \rightarrow U(L^2(\partial T,\mu))$ is a self-similar representation. In this paper, we are primarily concerned  the Koopman representation of two groups: the infinite dihedral group $D_\infty$ and the Lamplighter group $\mathcal{L}$ , both of which are self-similar groups acting on the  2-regular rooted  tree.

\begin{definition}\label{2.3}
Consider two unitary representations $\pi$ and $\rho$ of a discrete group $G$
in Hilbert spaces $\mathcal{H}$ and $\mathcal{K}$, respectively. One says that $\pi$ is weakly contained in $\rho$
(denoted by $\pi \prec \rho $) if for every $x\in \mathcal{H}$, every finite subset $F\subset G$, and every $\epsilon> 0$,
there exist $y_1 , y_2 , \cdots , y_n$ in $\mathcal{K}$ such that for all $g \in F$,
$$|\langle \pi(g)x, x\rangle-\sum_{i=1}^n \langle \rho(g)y_i, y_i\rangle|<\epsilon.$$
\end{definition}
Weak containment plays an important role in the representation theory of countable groups. In \cite{R.GY}, it was shown that for a discrete group $G$ and a pencil $A(z)=\sum_{i=0}^k z_i g_i$   in the group algebra $\mathbb{C}[G]$, if $\pi \prec \rho $, then $P(A_\pi)\subset P(A_\rho)$. Two unitary representations $\pi$ and $\rho$ are said to be weakly equivalent if $\pi \prec \rho $ and $\rho \prec \pi $, and we denote this by $\pi \sim \rho$. So the projective spectrum is invariant with respect to the weak equivalence of representations. In particular, for the infinite dihedral group $D_\infty$, let $\lambda$ and $\pi$ be the left regular representation and the Koopman
representation of $D_\infty$, respectively, R. Yang and R. Grigorchuk proved that they are weakly equivalent (see \cite{R.GY}, Theorem 7.2).
So we have $P(A_\lambda ) = P(A_\pi )$ for $D_\infty$, and combine with Lemma \ref{lemma2.1},  we get a partial result of Theorem A:
 $$P(A_\pi)=P(A_\lambda)=P(A_\rho)=\bigcup_{x\in [-1,1]}\{z\in \mathbb{C}^4 : z_0^2+z_3^2-z_1^2-z_2^2+2x(z_0z_3-z_1z_2)=0\}.$$
\subsection{Operator recursions and spectra}

 A group $G$ defined by an automaton over an alphabet of $d$ letters naturally acts by automorphisms on a $d$-regular
rooted tree (see \cite{GNS}). For the $2$-regular rooted tree $T$,  let $\xi_n=\{E_w;~w\in \{0,1\}^n \}$ be a partition of the boundary $\partial T$ into $2^n$ atoms,
where $E_w$ is an open and closed subset consisting of paths starting with the string $w$. Let $\mathcal{H}=L^2(\partial T, \mu)$ and $\mathcal{H}_n$ be a subspace of $\mathcal{H}$ spanned by characteristics functions
of atoms of the partition $\xi_n$ . Then $\dim \mathcal{H}_n=2^n$ and $\mathcal{H}_n$ is invariant with respect to the
Koopman representation $\pi$, because any automorphism of a tree permutes the atoms of the partition $\xi_n$. There is a natural isomorphism $\mathcal{H}_n\simeq \mathcal{H}_{n-1}\bigoplus \mathcal{H}_{n-1}$ by devide $\xi_n$ into $\xi_{n-1}^0=\{E_{0w}; w\in\{0,1\}^{n-1}\}$ and $\xi_{n-1}^1=\{E_{1w}; w\in\{0,1\}^{n-1}\}$.

Let $\pi_n=\pi|_{\mathcal{H}_n}$, then for an  element $m\in \mathbb{C}[G]$ of the group algebra, it is clear that
$$\sigma(\pi(m))\supset \overline{\bigcup_{n\geq 0}\sigma(\pi_n(m))}.$$
It's natural to ask that under what conditions  the equation $\sigma(\pi(m))= \overline{\bigcup_{n\geq 0}\sigma(\pi_n(m))}$  holds?  If the above equation holds, then we can reduce the computation of the projective spectrum to the finite dimension cases. When $\pi(m)$ is a self-adjoint operator, the above equation always holds. For this reason, many real pencils have been widely studied (cf.\cite{GZ,GNZ, DG}).

As an explicit example of operator recursions, assume $G=D_\infty=<a,t>$, after fixing the isomorphism $\mathcal{H}\simeq \mathcal{H}\bigoplus \mathcal{H}$, the operators $\pi(a),\pi(t)$ which are still denoted by $a$ and $t$,  satisfy the following operator recursion:
$$a=\left(
      \begin{array}{cc}
        0 & I \\
        I & 0 \\
      \end{array}
    \right)
,~~t=\left(
       \begin{array}{cc}
         a & 0 \\
         0 & t \\
       \end{array}
     \right),
$$
which corresponds to the wreath type relation: $a=\sigma, t=(a,t)$.

Let $a_n , t_n$ be the matrices corresponding to the generators for the representation $\pi_n, n=0,1,\cdots$, that is, $\pi_n(a),\pi_n(t)$. Then $a_0=b_0=Id$, and
$$a_n=\left(
      \begin{array}{cc}
        0 & I_{n-1} \\
        I_{n-1} & 0 \\
      \end{array}
    \right)
,~~t_n=\left(
       \begin{array}{cc}
         a_{n-1} & 0 \\
         0 & t_{n-1} \\
       \end{array}
     \right),~~~~~~~n\geq 1,
$$
where we keep in mind the natural isomorphism $\mathcal{H}_n\simeq \mathcal{H}_{n-1}\bigoplus \mathcal{H}_{n-1}$ . This recurrent relation
will play the major role in the section 3.

As another explicit example of operator recursions, assume $G=\mathcal{L}\cong <a,b>$, after fixing the isomorphism $\mathcal{H}\simeq \mathcal{H}\bigoplus \mathcal{H}$, the operators $\pi(a),\pi(b)$ which are still denoted by $a$ and $b$, satisfy the following operator recursion:
$$a=\left(
      \begin{array}{cc}
        0 & a \\
        b & 0 \\
      \end{array}
    \right)
,~~b=\left(
       \begin{array}{cc}
         a & 0 \\
         0 & b \\
       \end{array}
     \right),
$$
which corresponds to the wreath type relation: $a=(a,b)\sigma, b=(a,b)$.

Let $a_n , b_n$ be the matrices corresponding to the generators for the representation $\pi_n, n=0,1,\cdots$, that is, $\pi_n(a),\pi_n(b)$. Then $a_0=b_0=Id$, and
$$a_n=\left(
      \begin{array}{cc}
        0 & a_{n-1} \\
        b_{n-1} & 0 \\
      \end{array}
    \right)
,~~b_n=\left(
       \begin{array}{cc}
         a_{n-1} & 0 \\
         0 & b_{n-1} \\
       \end{array}
     \right),~~~~~~~n\geq 1,
$$
where we keep in mind the natural isomorphism $\mathcal{H}_n\simeq \mathcal{H}_{n-1}\bigoplus \mathcal{H}_{n-1}$ . This recursion
will play the major role in the section 4.

\subsection{Some fundamentals of complex dynamics}

Let $\hat{\mathbb{C}}=\mathbb{C}\bigcup \infty$ be the Riemann sphere, and for a function $H :\hat{ \mathbb{C}}\rightarrow \hat{ \mathbb{C}}$, let $H^n$ denote the $n$-th
iteration of $H$. In this paper, this notation will not cause confusion with power function. The complex dynamics studies various issues concerning the convergence of the sequence
$\{H^n \}_{n=1}^\infty$ . We will mention some definitions pertaining to our study and readers can find more information on this subject in \cite{F.B,J.M,T.U}.

\begin{definition}\label{2.4}
 Given a non-constant rational function $H :\hat{ \mathbb{C}}\rightarrow \hat{ \mathbb{C}}$, its Fatou set
$\mathcal{F}(H)$ is the maximal open subset of $\hat{ \mathbb{C}}$
 on which the sequence $\{H^n \}_{n=1}^\infty$ is equicontinuous, and the Julia set $\mathcal{J}(H)$ is the complement $\hat{ \mathbb{C}} \setminus \mathcal{F}(H) $.

\end{definition}
For the  Tchebyshev polynomial $T(z)=2z^2-1$, the following theorem is known (see \cite{F.B}).
\begin{lemma}\label{T}
For the map $T(z) = 2z^2-1$ on $\hat{ \mathds{C}}$, its Julia set $\mathcal{J}(T) =
[-1,1]$, and the iteration sequence $\{T^n \}$ converges normally to $\infty$ on the
Fatou set $\hat{ \mathbb{C}}\setminus [-1,1]$. Further, the zeros of the iteration sequence $\{T^n\}$ is densely contained in $[-1,1]$.
\end{lemma}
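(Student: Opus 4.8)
The plan is to conjugate $T$ to the squaring map through the Chebyshev (Joukowski) substitution and then transport all three assertions. First I would set $\psi(w)=\tfrac12(w+w^{-1})$ and check the semiconjugacy $T\circ\psi=\psi\circ S$ with $S(w)=w^2$, since $2\psi(w)^2-1=\tfrac12(w^2+w^{-2})=\psi(w^2)$; equivalently $T(\cos\theta)=\cos2\theta$, whence $T^n(\cos\theta)=\cos(2^n\theta)$ by induction. The key structural fact I would record is that $\psi$ is a degree-two branched cover of $\hat{\mathbb{C}}$ carrying the unit circle $\{|w|=1\}$ onto $[-1,1]$ and restricting to a biholomorphism of the exterior $\{|w|>1\}$ onto $\hat{\mathbb{C}}\setminus[-1,1]$, with branch points $w=\pm1\mapsto z=\pm1$.

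Next I would transport the trivial dynamics of $S$, for which $S^n\to\infty$ locally uniformly on $\{|w|>1\}$. For $z\in\hat{\mathbb{C}}\setminus[-1,1]$, let $w=\psi^{-1}(z)$ be its unique preimage with $|w|>1$; then $T^n(z)=\psi(w^{2^n})=\tfrac12(w^{2^n}+w^{-2^n})$. On any compact $K\subset\hat{\mathbb{C}}\setminus[-1,1]$ the set $\psi^{-1}(K)$ is a compact subset of $\{|w|>1\}$, so $|w|\ge r>1$ there and $|w^{2^n}|\ge r^{2^n}\to\infty$ uniformly; since $\psi(\zeta)\to\infty$ as $|\zeta|\to\infty$, we get $T^n\to\infty$ uniformly on $K$. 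This proves $\hat{\mathbb{C}}\setminus[-1,1]\subseteq\mathcal{F}(T)$ with normal convergence to $\infty$, which is the second assertion.

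For the reverse inclusion I would use $T([-1,1])=[-1,1]$ (again from $T(\cos\theta)=\cos2\theta$), so orbits starting on $[-1,1]$ stay in $[-1,1]$ and are bounded, whereas orbits starting off $[-1,1]$ escape to $\infty$. Fix $x_0\in[-1,1]$ and a neighborhood $U$; if some subsequence $T^{n_j}$ converged locally uniformly on $U$, its limit would be $\equiv\infty$ on the nonempty open set $U\setminus[-1,1]$, hence $\equiv\infty$ on $U$ by continuity, contradicting that $T^{n_j}$ takes values in $[-1,1]$ on $U\cap[-1,1]$. Thus $\{T^n\}$ is not normal at $x_0$, so $[-1,1]\subseteq\mathcal{J}(T)$; combined with the previous paragraph this gives $\mathcal{J}(T)=[-1,1]$.

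Finally, for the zeros I would solve $T^n(\cos\theta)=\cos(2^n\theta)=0$, obtaining the $2^n$ roots $z_{n,k}=\cos\!\big(\tfrac{(2k+1)\pi}{2^{n+1}}\big)$. As $n\to\infty$ the angles $\tfrac{(2k+1)\pi}{2^{n+1}}$ are dense in $[0,\pi]$, and since $\cos$ is a homeomorphism $[0,\pi]\to[-1,1]$ the union $\bigcup_n\{z:T^n(z)=0\}$ is dense in $[-1,1]$. The only delicate point is the transport of normality across the branched cover: one must work on the simply connected exterior $\{|w|>1\}$ with its holomorphic inverse branch $\psi^{-1}$, so that compacta in $\hat{\mathbb{C}}\setminus[-1,1]$ pull back to compacta bounded away from the circle; the remaining computations are routine.
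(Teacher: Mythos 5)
Your proof is correct, but there is nothing in the paper to compare it against: the lemma is stated as a known fact with a citation to Beardon's book \cite{F.B} and no proof is given. Your argument is the standard one — the semiconjugacy $T\circ\psi=\psi\circ S$ with $\psi(w)=\tfrac12(w+w^{-1})$ and $S(w)=w^2$, using that $\psi$ maps $\{|w|>1\}$ biholomorphically onto $\hat{\mathbb{C}}\setminus[-1,1]$ — and all three assertions are transported correctly, including the explicit zeros $\cos\bigl(\tfrac{(2k+1)\pi}{2^{n+1}}\bigr)$, which is exactly the density statement the paper later uses. The only step worth tightening is the reverse inclusion: rather than ``$\equiv\infty$ on $U$ by continuity,'' invoke the dichotomy for locally uniform limits in the spherical metric on a connected neighborhood $U$ — the subsequential limit is either meromorphic on $U$ or identically $\infty$ there, and a meromorphic function cannot equal $\infty$ on the nonempty open set $U\setminus[-1,1]$, so the limit is $\equiv\infty$, contradicting $T^{n_j}(x_0)\in[-1,1]$. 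With that phrasing the argument is complete.
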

Moreover, we consider the dynamics in complex projective space. Let $H:\mathbb{P}^3 \rightarrow \mathbb{P}^3$ be a polynomial map defined by $H(z)=[A(z):B(z):C(z):D(z)]$, where $A,B,C$ and $D$ are homogeneous polynomials in $[z_0:z_1:z_2:z_3]$ of the same degree $d\geq 2$.
In the case that there are only a finite number of such common zeros,
the map $H$ is called a rational map. A point $z\in \mathbb{P}^3$ is called an  indeterminacy point if $A(z) = B(z) = C(z) =D(z)= 0$. The set of all indeterminacy points of $H$ is called the indeterminacy set of $H$, denote by $I(H)$, or simply $I$ when there is no confusion. And the $n$-indeterminacy set $I_n$ of $H$ is defined by $I(H^n)$. In the following, we always use the canonical lift of $H$ to $\mathbb{C}^4$ to calculate $I_n$, and in this case, $I_n=\phi(H^{-n}(O))$, where $H$ is regarded as its canonical lift to $\mathbb{C}^4$.
The extended indeterminacy set of $H$ is defined as
$$E_H=\overline{\bigcup_{n\geq 1}I_n}=\overline{\bigcup_{n\geq 1} \phi(H^{-n}(O)}).$$
For convenience, we write it as $E$ when there is no confusion about the map $H$.
\begin{definition}\label{2.6}
 Let $H : \mathbb{P}^3 \rightarrow \mathbb{P}^3$ be a rational map such that $\mathbb{P}^3 \setminus E$ is nonempty.
A point $p \in \mathbb{P}^3 \setminus E $ is said to be a Fatou point if there exists for every $\epsilon>0$ some
neighborhood $U$ of $p$ such that $\textit{diam} H^n (U\setminus E) <\epsilon$ for all $n$, where \textit{diam} stands
for the diameter with respect to the Fubini-Study metric on $\mathbb{P}^3 $ . The Fatou set $\mathcal{F}(H)$
is the set of Fatou points of $H$. The Julia set $\mathcal{J}(H)$ is the complement $\mathbb{P}^3\setminus \mathcal{F}(H)$.
\end{definition}
It is shown in \cite{T.U} that a point $p$ is in $\mathcal{F}(H)$ if and only if there exists a neighborhood $V$ of $p$ such that the sequence $\{H^n\}$ is a normal family on $V$. By a normal
family argument, it follows that the Fatou set is open and the Julia set is closed. The
extended indeterminacy set $E$ is clearly contained in the Julia set. Lastly, the map $H$ preserves the Julia as well as the Fatou set.

\section{The spectral dynamics of the infinite dihedral group $D_\infty$ }

The following lemma will be used in this section.
\begin{lemma}[\cite{GZ}, Lemma 7]\label{lemma3.1}
 Let $A,B,C$ and $D$ be $n$ by $n$ matrices with complex coefficients such that
$AC =CA$. Then
\begin{equation*}
  \det \left(
         \begin{array}{cc}
           A & B \\
           C & D \\
         \end{array}
       \right)=\det(AD-CB).
\end{equation*}

\end{lemma}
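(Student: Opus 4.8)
The plan is to reduce the identity to the classical Schur-complement factorization of a block matrix, treating first the case in which $A$ is invertible and then removing that hypothesis by a perturbation argument.

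First I would assume that $A$ is invertible. In that case the block matrix admits the factorization
\[
\left(\begin{array}{cc} A & B \\ C & D \end{array}\right)
=\left(\begin{array}{cc} I & 0 \\ CA^{-1} & I \end{array}\right)
 \left(\begin{array}{cc} A & B \\ 0 & D-CA^{-1}B \end{array}\right),
\]
and taking determinants (the left factor is block lower triangular and the right factor block upper triangular, both with unit diagonal blocks) gives the Schur-complement formula $\det\left(\begin{array}{cc} A & B \\ C & D \end{array}\right)=\det(A)\det(D-CA^{-1}B)$. Now I would invoke multiplicativity of the determinant together with the commutativity hypothesis $AC=CA$, which yields $ACA^{-1}=CAA^{-1}=C$, and compute
\[
\det(A)\det(D-CA^{-1}B)=\det\big(A(D-CA^{-1}B)\big)=\det(AD-ACA^{-1}B)=\det(AD-CB),
\]
which is exactly the claimed identity in the invertible case.

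The main point is then to drop the invertibility assumption on $A$, and here I would use a polynomial-identity (density) argument. Since $A$ has only finitely many eigenvalues, the matrix $A+\epsilon I$ is invertible for all but finitely many $\epsilon\in\mathbb{C}$; moreover $(A+\epsilon I)C=AC+\epsilon C=CA+\epsilon C=C(A+\epsilon I)$, so the commutation relation is preserved under this perturbation. Applying the invertible case to $A+\epsilon I$ then gives
\[
\det\left(\begin{array}{cc} A+\epsilon I & B \\ C & D \end{array}\right)=\det\big((A+\epsilon I)D-CB\big)
\]
for all such $\epsilon$. Both sides are polynomials in $\epsilon$, being determinants of matrices whose entries depend polynomially (indeed affinely) on $\epsilon$, and they agree on a cofinite set of values; hence they coincide as polynomials, and evaluating at $\epsilon=0$ recovers the desired identity for the original matrix $A$.

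The only delicate step, and the one I would expect to demand the most care, is precisely this passage from invertible $A$ to arbitrary $A$: one must check both that the shift $A\mapsto A+\epsilon I$ leaves the hypothesis $AC=CA$ intact and that each side of the equation is genuinely polynomial (equivalently continuous) in $\epsilon$, so that equality on all but finitely many $\epsilon$ forces equality at $\epsilon=0$. Everything else is the standard block-triangular determinant computation, and the commutativity assumption enters only to convert $ACA^{-1}B$ back into $CB$.
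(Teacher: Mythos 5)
Your proof is correct and follows essentially the same route as the paper: the Schur-complement factorization for invertible $A$, followed by multiplicativity of the determinant and the commutation $ACA^{-1}=C$. The paper simply writes ``without loss of generality, assume $A$ is invertible,'' whereas you have spelled out the perturbation $A\mapsto A+\epsilon I$ and the polynomial-identity argument that justifies this reduction — a detail the paper leaves implicit.
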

\begin{proof}
Without loss of generality, assume $A$ is invertible, then by the argument of Schur complement, we have
\begin{align*}
  \det \left(
         \begin{array}{cc}
           A & B \\
           C & D \\
         \end{array}
       \right) & =\det(A)\det(D-CA^{-1}B) \\
   & =\det(AD-ACA^{-1}B)=\det(AD-CB).
\end{align*}

\end{proof}

The Koopman representation of $D_\infty$ on the $2$-regular rooted tree $T$ is realized by the following automaton in Figure 2,  where $a$ and $t$
are automorphisms of $T$ satisfying the recursive relation $a=\sigma, t=(a,t)$.
\begin{figure}[htb]

\centering

\includegraphics[height=4.5cm,width=9.5cm]{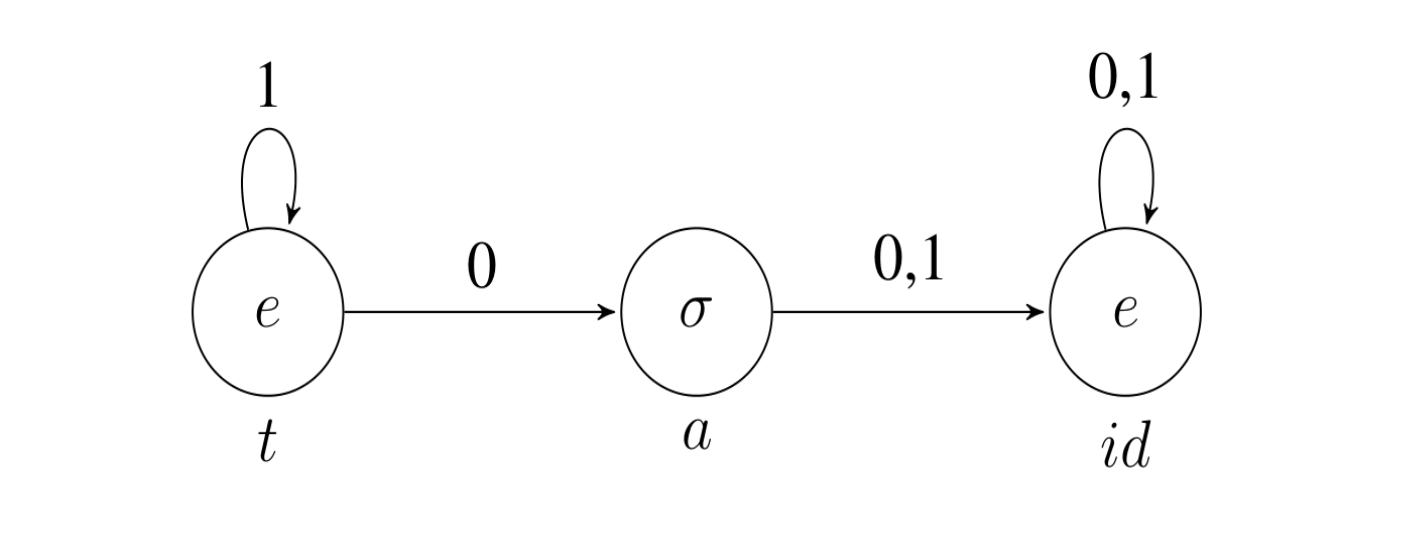}

\caption{Automaton of the group $D_\infty$}

\end{figure}

For convenience, we denote $\pi_n(a), \pi_n(b)$ by $a_n, b_n$ in short, then $a_0=t_0=Id$, and
$$a_n=\left(
        \begin{array}{cc}
          0 & I_{n-1} \\
          I_{n-1} & 0 \\
        \end{array}
      \right)
,~~t_n=\left(
         \begin{array}{cc}
           a_{n-1} & 0 \\
           0 & t_{n-1}\\
         \end{array}
       \right),~~
a_nt_n=\left(
         \begin{array}{cc}
           0 & t_{n-1} \\
           a_{n-1} & 0 \\
         \end{array}
       \right).
$$
Then for $A_{\pi_n}(z)=z_0+z_1a_n+z_2t_n+z_3a_nt_n$, we have
\begin{align*}
  \det(A_{\pi_{n+1}}(z)) & =\det \left(
                          \begin{array}{cc}
                            z_0+z_2a_n & z_1+z_3t_n \\
                            z_1+z_3a_n & z_0+z_2t_n \\
                          \end{array}
                        \right)\\
   &=\det \left( (z_0+z_2a_n)(z_0+z_2t_n)-(z_1+z_3a_n)(z_1+z_3t_n)\right)\\
   &=\det \left((z_0^2-z_1^2)+(z_0z_2-z_1z_3)a_n+(z_0z_2-z_1z_3)t_n+(z_1^2-z_3^2)a_nt_n
        \right)\\
   &=\det\left(A_{\pi_n}(F(z))\right),
\end{align*}
where $F: \mathbb{C}^4\rightarrow \mathbb{C}^4 $ is the homogenous polynomial map defined by
\begin{equation}
  F(z_0,z_1,z_2,z_3)= (~z_0^2-z_1^2~,~z_0z_2-z_1z_3~,~ z_0z_2-z_1z_3~,~z_2^2-z_3^2~).
\end{equation}
It follows that $\det(A_{\pi_{n+1}}(z))=\det\left(A_{\pi_0}(F^{n+1}(z))\right)$, and hence $\det(A_{\pi_{n+1}}(z))=0$ if and only if $F^{n+1}(z)\in L$, where $L$ is the hyperplane $\{z\in \mathbb{C}^4 : z_0+z_1+z_2+z_3=0\}$. So we have $P(A_\pi)\supset \overline{\bigcup_{n\geq 0} F^{-n}(L)}$.

The following lemma gives the extended indeterminacy set of $F$.

\begin{lemma}\label{3.2}
For $F(z)=[~z_0^2-z_1^2~:~z_0z_2-z_1z_3~:~ z_0z_2-z_1z_3~:~z_2^2-z_3^2~]$, the indeterminacy sets
$$I_1= \{ [1:1:\zeta :\zeta], [1:-1:\zeta:-\zeta] ~~|~~ \zeta \in \hat{\mathbb{C}} \},$$
$$I_3=I_2=I_1 \cup \{[1:\zeta :1:\zeta ], [1:\zeta,-1:-\zeta]~|~ \zeta \in \hat{\mathbb{C}}\},$$
 hence the extended indeterminacy set
$$E=I_2=\{ [1:1:\zeta :\zeta], [1:-1:\zeta:-\zeta], [1:\zeta :1:\zeta ] , [1:\zeta,-1:-\zeta] ~~|~~ \zeta \in \hat{\mathbb{C}} \}. $$
\end{lemma}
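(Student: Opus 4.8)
The plan is to work throughout with the canonical lift of $F$ to $\mathbb{C}^4$ and to exploit the recursive description $I_n=\phi(F^{-n}(O))$ from Section~2. Unwinding one step, a point $z$ lies in $I_n$ precisely when either $F(z)=O$ (so $[z]\in I_1$) or $F(z)\neq O$ and the projective point $[F(z)]$ lies in $I_{n-1}$; in particular $I_1\subseteq I_2\subseteq\cdots$ and, for $n\ge 2$, $I_n=I_1\cup F^{-1}(I_{n-1})$ as projective sets. First I would compute $I_1$ directly as the common projective zero locus of $z_0^2-z_1^2$, $z_0z_2-z_1z_3$ and $z_2^2-z_3^2$. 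Imposing $z_0=\pm z_1$ and $z_2=\pm z_3$ and substituting into $z_0z_2=z_1z_3$ forces the two signs to coincide; after treating the degenerate cases $z_1=0$ and $z_3=0$ (which supply the endpoints $\zeta=0,\infty$) this gives exactly the two lines $[1:1:\zeta:\zeta]$ and $[1:-1:\zeta:-\zeta]$.

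The \emph{structural observation} that drives everything else is that the second and third coordinate polynomials of $F$ are identical, so the image of $F$ is contained in the hyperplane $H=\{z_1=z_2\}$. Each line occurring in the indeterminacy data meets $H$ in a single point: $[1:1:\zeta:\zeta]$ meets $H$ at $p_+=[1:1:1:1]$ and $[1:-1:\zeta:-\zeta]$ meets $H$ at $p_-=[1:-1:-1:1]$. Since $[F(z)]\in H$ always, the condition $[F(z)]\in I_1$ is therefore equivalent to $[F(z)]\in\{p_+,p_-\}$, and hence $I_2=I_1\cup F^{-1}(p_+)\cup F^{-1}(p_-)$.

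To compute $F^{-1}(p_+)$ I set $P=z_0^2-z_1^2$, $Q=z_0z_2-z_1z_3$, $R=z_2^2-z_3^2$, so that $[F(z)]=p_+$ reads $P=Q=R$. Writing $u=z_0-z_2$ and $v=z_1-z_3$ turns these two equations into the linear system $z_0u=z_1v$, $z_2u=z_3v$. The case $u=v=0$ gives $z_0=z_2$, $z_1=z_3$, i.e. the new line $[1:\zeta:1:\zeta]$; in the remaining case, adding and subtracting the two equations forces $u=\pm v$ together with $z_0=\pm z_1$, which after substitution lands back inside $I_1$. The symmetric computation for $p_-$, using $u=z_0+z_2$, $v=z_1+z_3$, produces the line $[1:\zeta:-1:-\zeta]$ and again nothing outside $I_1$. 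This establishes the stated form of $I_2$.

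Finally, to get $I_3=I_2$ and close the induction I would observe that the two new lines of $I_2$ also meet $H$ in single points, namely $p_+$ (on $[1:\zeta:1:\zeta]$, at $\zeta=1$) and $p_-$ (on $[1:\zeta:-1:-\zeta]$, at $\zeta=-1$). Thus $I_m\cap H=\{p_+,p_-\}$ for every $m\ge 1$, and since the image of $F$ lies in $H$ we get, for all $n\ge 2$, $I_n=I_1\cup F^{-1}(I_{n-1}\cap H)=I_1\cup F^{-1}(\{p_+,p_-\})=I_2$. As $I_2$ is a finite union of projective lines it is closed, so $E=\overline{\bigcup_{n\ge 1}I_n}=I_2$. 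I expect the one genuinely delicate point to be the residual case in the computation of $F^{-1}(p_\pm)$: a degree count (B\'ezout) for the two quadrics $\{P=Q\}$ and $\{Q=R\}$ predicts a quartic intersection, and one must verify that the part not accounted for by the new line degenerates entirely into $I_1$ rather than contributing a genuinely new component; the substitution in $u,v$ is exactly what makes this collapse transparent.
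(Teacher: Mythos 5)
Your proposal is correct and follows essentially the same route as the paper: compute $I_1$ as the common zero locus, observe that the equality of the second and third coordinates of $F$ forces any preimage in $I_1$ to be one of the two points $[1:1:1:1]$ or $[1:-1:-1:1]$ (the paper phrases this as ``$\zeta=\pm1$'' rather than via the hyperplane $\{z_1=z_2\}$), solve $P=Q=R$ and $P=-Q=R$ by the same case split (the paper uses $(z_0-z_2)^2=(z_1-z_3)^2$ where you use the $u,v$ substitution, an equivalent factorization), and close the induction by noting the two new lines again meet the image only at those two points. The only differences are cosmetic bookkeeping, so nothing further to flag.
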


\begin{proof}
For $z\in I_1$, by definition, $z_0^2-z_1^2=z_0z_2-z_1z_3=z_2^2-z_3^2=0$, hence $z_0=z_1, z_2=z_3$ or $z_0=-z_1,z_2=-z_3$. This is $[1:1:\zeta :\zeta]$ or $[1:-1:\zeta:-\zeta]$.

Since $I_2 = \{z \in \mathds{P}^3  ~|~ F(z) \in I_1 \}$, we must consider pre-images to each
point in $I_1$ with respect to $F$. If $F(z)=[1:1:\zeta :\zeta]$, then $\zeta=1$, hence $z_0^2-z_1^2=z_0z_2-z_1z_3=z_2^2-z_3^2$, and we have $z_0^2-z_1^2+z_2^2-z_3^2=2(z_0z_2-z_1z_3)$, that means $(z_0-z_2)^2=(z_1-z_3)^2$, if $z_0=z_2$, then $z_1=z_3$, that means $z=[1:\zeta :1:\zeta ]$, if $z_0\neq z_2$, then $z_0-z_2=\pm (z_1-z_3)\neq 0$, combine with $z_0^2-z_1^2=z_2^2-z_3^2$, we have $z_0+z_2=\pm (z_1+z_3)$, that implies $z_0=\pm z_1, z_2=\pm z_3$, i.e. $z\in I_1$. If $F(z)=[1:-1:\zeta:-\zeta]$, then $\zeta=-1$, hence $z_0^2-z_1^2=-(z_0z_2-z_1z_3)=z_2^2-z_3^2$, a similar discussion implies that $z\in I_1$ or $z=[1:\zeta:-1:-\zeta]$.

For $I_3$, we only need consider the case $F(z)=[1:\zeta :1:\zeta ]$ or $F(z)=[1:\zeta:-1:-\zeta]$, but this means $\zeta=1$ or $\zeta=-1$, hence $z\in I_2$. So we have $I_3=I_2$, and $E=I_2$.
\end{proof}

\begin{lemma}{\label{lemma3.3}}
Let $z'=(z_0',z_1',z_2',z_3')=F(z)$  , then
\begin{align}
  {z'}_0^2-{z'}_1^2 &=(z_0^2-z_1^2)(z_0^2-z_1^2+z_3^2-z_2^2)-(z_1z_2-z_0z_3)^2  \\
   {z'}_3^2-{z'}_2^2 & =(z_3^2-z_2^2)(z_0^2-z_1^2+z_3^2-z_2^2)-(z_1z_2-z_0z_3)^2\\
   {z'}_0^2-{z'}_1^2+{z'}_3^2-{z'}_2^2 & =(z_0^2-z_1^2+z_3^2-z_2^2)^2-2(z_1z_2-z_0z_3)^2        \\
   z_0'z_2'-z_1'z_3' &=(z_0z_2-z_1z_3)(z_0^2-z_1^2+z_3^2-z_2^2)             \\
   {z'}_1{z'}_2-z_0'z_3' & =(z_1z_2-z_0z_3)^2 .
\end{align}
\end{lemma}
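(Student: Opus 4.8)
The plan is to substitute the explicit coordinates of $z'=F(z)$ into each of the five relations and reduce all of them to a single elementary ``difference of two products'' identity. Reading off the components of $F$, one has
\[
z_0'=z_0^2-z_1^2,\qquad z_1'=z_2'=z_0z_2-z_1z_3,\qquad z_3'=z_2^2-z_3^2 .
\]
The entire computation hinges on the polynomial identity
\[
(z_0z_2-z_1z_3)^2-(z_1z_2-z_0z_3)^2=(z_0^2-z_1^2)(z_2^2-z_3^2),
\]
which I would establish at the outset, not by brute expansion but by the two linear factorizations
\[
(z_0z_2-z_1z_3)-(z_1z_2-z_0z_3)=(z_0-z_1)(z_2+z_3),\qquad
(z_0z_2-z_1z_3)+(z_1z_2-z_0z_3)=(z_0+z_1)(z_2-z_3),
\]
whose product is precisely $(z_0^2-z_1^2)(z_2^2-z_3^2)$. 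Every one of the five relations then falls out by bookkeeping.

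For the two ``mixed'' identities I would argue directly by extracting a common factor. Since $z_1'=z_2'=z_0z_2-z_1z_3$, both summands of $z_0'z_2'-z_1'z_3'$ carry the factor $z_0z_2-z_1z_3$, whence
\[
z_0'z_2'-z_1'z_3'=(z_0z_2-z_1z_3)\bigl[(z_0^2-z_1^2)-(z_2^2-z_3^2)\bigr],
\]
which is the fourth relation. Likewise $z_1'z_2'-z_0'z_3'=(z_0z_2-z_1z_3)^2-(z_0^2-z_1^2)(z_2^2-z_3^2)$, and the core identity rewrites the right-hand side as $(z_1z_2-z_0z_3)^2$, giving the fifth relation.

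For the first relation I would expand $z_0'^2-z_1'^2=(z_0^2-z_1^2)^2-(z_0z_2-z_1z_3)^2$ and match it against the claimed right-hand side; after cancelling the common term $(z_0^2-z_1^2)^2$, what is left is exactly the core identity in the form $(z_1z_2-z_0z_3)^2-(z_0z_2-z_1z_3)^2=(z_0^2-z_1^2)(z_3^2-z_2^2)$. The second relation is handled identically, with $z_0^2-z_1^2$ and $z_3^2-z_2^2$ interchanged, mirroring the symmetry $z_0\leftrightarrow z_3$, $z_1\leftrightarrow z_2$ already visible in $F$. The third relation is then simply the sum of the first two: adding them, the coefficients of the common factor collect to $(z_0^2-z_1^2)+(z_3^2-z_2^2)=z_0^2-z_1^2+z_3^2-z_2^2$, so the right-hand side becomes $(z_0^2-z_1^2+z_3^2-z_2^2)^2-2(z_1z_2-z_0z_3)^2$.

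There is no genuine obstacle here; the only risk is arithmetic bookkeeping among the several sign patterns. The one point worth isolating conceptually is the core identity, whose cleanest derivation is the pair of factorizations above rather than term-by-term expansion; once it is in hand, all five relations drop out mechanically, and I would present the argument in that order so that the factorization lemma does the work for every case.
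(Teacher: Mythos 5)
Your proof is correct and follows essentially the same route as the paper, which likewise reduces all five relations to a direct computation built on the identity $(z_0^2-z_1^2)(z_2^2-z_3^2)=(z_0z_2-z_1z_3)^2-(z_1z_2-z_0z_3)^2$. Your derivation of that identity via the two linear factorizations is a nice touch, but the overall argument is the same.
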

\begin{proof}
This is obtained by a direct computation and by using the algebraic identity :
$$(z_0^2-z_1^2)(z_2^2-z_3^2)=(z_0z_2-z_1z_3)^2-(z_1z_2-z_0z_3)^2.$$
\end{proof}
\begin{remark}
 It is worth noting that we can write $E$ as
\begin{equation*}
  \{~~[z_0:z_1:z_2:z_3] ~|~ z_0z_3-z_1z_2=z_0^2+z_3^2-z_1^2-z_2^2=0  ~~\}.
\end{equation*}
Recall that $\phi$ denotes the cannonical projection from $\mathbb{C}^{n+1}$ to
$\mathbb{P}^n$, and let $S$ be the set $\{z\in \mathbb{C}^4: z_0z_3-z_1z_2=z_0^2+z_3^2-z_1^2-z_2^2=0\}$, then we claim that $E=\phi(S)$. In fact, by Lemma \ref{3.2}, it is easy to check that $E\subset \phi(S)$, for the converse, by Lemma \ref{lemma3.3}, we have $F^2(S)=O$, hence $\phi(S)\subset E$.
\end{remark}

Next we will consider the dynamic properties of  $F$. The main idea is the semi-conjugacy method. More precisely, if we can find a semi-conjugacy function $\varphi: \mathbb{P}^{n}\rightarrow \hat{\mathbb{C}}$ and a polynomial function $f: \hat{\mathbb{C}}\rightarrow \hat{\mathbb{C}}$ such that
\begin{equation*}
  \varphi(F(z_0,z_1,\cdots, z_n))=f(\varphi(z_0,z_1,\cdots,z_n)),
\end{equation*}
then we will have
\begin{equation*}
  \varphi(F^n(z_0,z_1,\cdots, z_n))=f^n(\varphi(z_0,z_1,\cdots,z_n)).
\end{equation*}
Therefore it is suffice to study the  dynamical properties of the one variable function $f$.

\vspace{0.3cm}
For $z\notin S$,  let $x=\frac{z_0^2+z_3^2-z_1^2-z_2^2}{2(z_1z_2-z_0z_3)},~x'=\frac{{z'}_0^2+{z'}_3^2-{z'}_1^2-{z'}_2^2}{2({z'}_1{z'}_2-z_0'z_3')}$ (by the above Remark , $x,x'$ are well defined), then combine the formula $(3.4)$ and $(3.6)$ of Lemma \ref{lemma3.3} , we have
$$x'=2x^2-1.$$
Let $T(z)=2z^2-1$ be the Tchebyshev polynomial,  $\tau(z)=\frac{z_0^2+z_3^2-z_1^2-z_2^2}{2(z_1z_2-z_0z_3)}$ for $z\notin S$,
then we have
\begin{equation}
  \tau(F(z))=T(\tau(z)),~~ ~~ z\notin S.
\end{equation}
Note that $\tau$ is homogeneous of degree $0$, and $\phi(S)=E$, it can be considered as a map from $\mathbb{P}^3\setminus E$ to $\hat{\mathbb{C}}$.  Hence we get the following proposition:
\begin{proposition}\label{prop3.4}
For $k\geq 1$, the following diagram is commutative:
$$\begin{array}[c]{ccc}
   \mathbb{P}^3\setminus E & \stackrel{F^k}{\longrightarrow} & \mathbb{P}^3\setminus E \\
   \downarrow\scriptstyle{\tau} &  & \downarrow\scriptstyle{\tau} \\
   \hat{\mathbb{C}} & \stackrel{T^k}{\longrightarrow} & \hat{\mathbb{C}}
 \end{array}
$$
where $\tau: \mathbb{P}^3\setminus E \rightarrow \hat{\mathbb{C}}$ is defined by $\tau(z)=\frac{z_0^2+z_3^2-z_1^2-z_2^2}{2(z_1z_2-z_0z_3)}$, $F:\mathbb{P}^3\setminus E \rightarrow \mathbb{P}^3\setminus E $ is defined by
\begin{equation}
  F(z_0,z_1,z_2,z_3)= [~z_0^2-z_1^2~:~z_0z_2-z_1z_3~:~ z_0z_2-z_1z_3~:~z_2^2-z_3^2~],
\end{equation}
 and the extended indeterminacy set $E=\{z\in \mathbb{P}^3: z_0z_3-z_1z_2=z_0^2+z_3^2-z_1^2-z_2^2=0\}$.
\end{proposition}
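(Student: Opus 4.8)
The plan is to read the commutativity of the diagram as the assertion that the degree-zero rational function $\tau$ semi-conjugates $F$ to the Tchebyshev map $T$, and to obtain the general $k$ from the single-step identity $\tau\circ F=T\circ\tau$ — which is essentially equation (3.7), already extracted from Lemma \ref{lemma3.3} — by induction. The one thing that must be checked before any induction can run is that $F$ genuinely restricts to a self-map of $\mathbb{P}^3\setminus E$; otherwise neither the iterate $F^k$ nor the intermediate evaluations of $\tau$ would be defined there. I would therefore split the argument into three steps: (i) the invariance of $\mathbb{P}^3\setminus E$ under $F$; (ii) the base case $k=1$, valid on all of $\mathbb{P}^3\setminus E$ and including the value $\infty\in\hat{\mathbb{C}}$; and (iii) the induction.

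For step (i), abbreviate $p=z_1z_2-z_0z_3$ and $q=z_0^2+z_3^2-z_1^2-z_2^2$, so that by the Remark following Lemma \ref{lemma3.3} the cone $S$ over $E$ is exactly $\{p=q=0\}$ and $\tau=q/(2p)$. Writing $p',q'$ for the same quantities evaluated at $z'=F(z)$, formulas (3.4) and (3.6) of Lemma \ref{lemma3.3} give $q'=q^2-2p^2$ and $p'=p^2$. If $z\notin E$ then in particular $z\notin I_1=I(F)\subseteq E$, so $F(z)$ is a well-defined nonzero vector, and $F(z)\in E$ would force $p'=q'=0$, i.e. $p^2=0$ and $q^2-2p^2=0$, i.e. $p=q=0$, i.e. $z\in E$ — a contradiction. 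Hence $F$ maps $\mathbb{P}^3\setminus E$ into itself.

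For step (ii), on $\mathbb{P}^3\setminus E$ at least one of $p,q$ is nonzero. When $p\neq0$ a direct computation gives $\tau(F(z))=q'/(2p')=(q^2-2p^2)/(2p^2)=2(q/(2p))^2-1=T(\tau(z))$, which is (3.7). When $p=0$ we must have $q\neq0$, so $\tau(z)=\infty$; here $p'=0$ and $q'=q^2\neq0$, whence $\tau(F(z))=\infty=T(\infty)=T(\tau(z))$, so the identity extends across the pole of $\tau$. Step (iii) is then a routine induction: assuming $\tau\circ F^{k-1}=T^{k-1}\circ\tau$ and using that $F^{k-1}(z)\in\mathbb{P}^3\setminus E$ by step (i), one computes $\tau(F^k(z))=\tau\big(F(F^{k-1}(z))\big)=T\big(\tau(F^{k-1}(z))\big)=T\big(T^{k-1}(\tau(z))\big)=T^k(\tau(z))$, which is the desired commutativity.

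Essentially all of the computational content sits in Lemma \ref{lemma3.3}, so I do not expect a serious obstacle. The step most easily overlooked — and the one I would treat as the crux — is the invariance $F(\mathbb{P}^3\setminus E)\subseteq\mathbb{P}^3\setminus E$ of step (i), since without it the diagram's maps are not even defined on the indicated domain; the verification that the semi-conjugacy persists at $\tau=\infty$ in step (ii) is a minor but necessary supplement, because (3.7) as originally stated is only an equality of finite scalars on the locus $p\neq0$.
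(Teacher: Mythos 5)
Your proof is correct and follows essentially the same route as the paper: the single-step identity $\tau\circ F=T\circ\tau$ is exactly equation (3.7), obtained from formulas (3.4) and (3.6) of Lemma \ref{lemma3.3}, and the case $k\geq 1$ follows by iteration. Your steps (i) and (ii) — checking that $F$ preserves $\mathbb{P}^3\setminus E$ and that the identity persists at $\tau=\infty$ — are details the paper leaves implicit, and your verification of them via $p'=p^2$, $q'=q^2-2p^2$ is accurate.
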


The following proposition gives the explicit form of $F^n$.

\begin{proposition}\label{prop3.5}
Let $F^n_i(z)$ denote the $i$-th component of $F^n(z)$, then for $z\notin\{ z_1z_2-z_0z_3=0\}$,   it holds that
\begin{align}
  F^{n+2}_0(z)&=(z_1z_2-z_0z_3)^{2^{n+1}}\cdot \left( 2^{n+1}\prod_{k=0}^{n} T^k(\tau(z))\right) \left(\frac{z_0^2-z_1^2}{z_1z_2-z_0z_3}- \sum_{k=1}^{n+1} \frac{1}{2^k \prod_{j=0}^{k-1} T^j(\tau(z))} \right), \\
  F^{n+2}_1(z)&=F^{n+2}_2(z)=(z_1z_2-z_0z_3)^{2^{n+1}}\cdot \left( 2^{n+1} \prod_{k=0}^{n} T^k(\tau(z))\right)\cdot \frac{z_0z_2-z_1z_3}{z_1z_2-z_0z_3} ,\\
F^{n+2}_3(z)&=(z_1z_2-z_0z_3)^{2^{n+1}}\cdot \left( 2^{n+1}\prod_{k=0}^{n} T^k(\tau(z))\right) \left(\frac{z_2^2-z_3^2}{z_1z_2-z_0z_3}+ \sum_{k=1}^{n+1} \frac{1}{2^k \prod_{j=0}^{k-1} T^j(\tau(z))} \right).
\end{align}
For $z\in\{ z_1z_2-z_0z_3=0\}$,   it holds that
\begin{align*}
  F^{n+2}_0(z)&=(z_0^2-z_1^2)(z_0^2-z_1^2+z_3^2-z_2^2)^{2^{n+1}-1}, \\
  F^{n+2}_1(z)&=F^{n+2}_2(z)=(z_0z_2-z_1z_3)(z_0^2-z_1^2+z_3^2-z_2^2)^{2^{n+1}-1},\\
F^{n+2}_3(z)&=(z_2^2-z_3^2)(z_0^2-z_1^2+z_3^2-z_2^2)^{2^{n+1}-1}.
\end{align*}
\end{proposition}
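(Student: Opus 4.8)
The plan is to collapse the four-dimensional iteration into a handful of scalar recursions for the quadratic forms attached to a point, solve those recursions in closed form, and then read off the coordinates of $F^{n+2}(z)$. Set $R=z_1z_2-z_0z_3$, $W=z_0^2-z_1^2+z_3^2-z_2^2$, $P=z_0^2-z_1^2$, $Q=z_0z_2-z_1z_3$, and $\widetilde{S}=z_2^2-z_3^2$, so that $F(z)=[\,P:Q:Q:\widetilde{S}\,]$ and $W=2R\,\tau(z)$. The key structural remark is that, since the first, middle, and last entries of $F(w)$ are exactly $P(w)$, $Q(w)$, and $\widetilde{S}(w)$, one has for every $m\geq1$ the identities $F^{m}_0(z)=P(F^{m-1}z)$, $F^{m}_1(z)=F^{m}_2(z)=Q(F^{m-1}z)$, and $F^{m}_3(z)=\widetilde{S}(F^{m-1}z)$; applying this with $m=n+2$ reduces everything to understanding how $P,Q,\widetilde{S}$ evolve along the orbit $z,F(z),F^2(z),\dots$.

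First I would record the one-step transition rules, all of which are immediate rewritings of Lemma \ref{lemma3.3}: $R\mapsto R^2$, $W\mapsto W^2-2R^2$, $Q\mapsto QW$, $P\mapsto PW-R^2$, and $\widetilde{S}\mapsto \widetilde{S}\,W+R^2$ (the last after flipping the sign in the identity for ${z'}_3^2-{z'}_2^2$). The rule $W\mapsto W^2-2R^2$ together with $R\mapsto R^2$ is precisely the semiconjugacy $\tau(F(z))=T(\tau(z))$ of Proposition \ref{prop3.4}. Iterating the first two rules gives the explicit orbit data $R(F^kz)=R^{2^k}$, $\tau(F^kz)=T^k(\tau(z))$, and hence $W(F^kz)=2R^{2^k}\,T^k(\tau(z))$, which is all that feeds into the remaining three recursions.

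The core of the proof is solving those three recursions. Writing $P_m=P(F^mz)$, etc., and introducing the product $H_m=\prod_{k=0}^{m-1}W(F^kz)=2^mR^{2^m-1}\prod_{k=0}^{m-1}T^k(\tau(z))$ (so $H_0=1$ and $H_m=H_{m-1}W(F^{m-1}z)$), the $Q$-recursion is a pure product $Q_m=Q\,H_m$, which with $m=n+1$ gives the middle-coordinate formula at once. For $P$ and $\widetilde{S}$ the recursions are linear and inhomogeneous; dividing $P_m=P_{m-1}W(F^{m-1}z)-R^{2^m}$ by the integrating factor $H_m$ makes the left side telescope, yielding
\[
\frac{P_m}{H_m}=P-\sum_{k=1}^{m}\frac{R^{2^k}}{H_k},\qquad \frac{\widetilde{S}_m}{H_m}=\widetilde{S}+\sum_{k=1}^{m}\frac{R^{2^k}}{H_k},
\]
the sign being the only difference. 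The decisive simplification is $R^{2^k}/H_k=R/\big(2^k\prod_{j=0}^{k-1}T^j(\tau(z))\big)$, valid exactly because $R\neq0$ off the locus $\{z_1z_2-z_0z_3=0\}$; substituting $m=n+1$ and pulling out the common factor $R^{2^{n+1}}=(z_1z_2-z_0z_3)^{2^{n+1}}$ reproduces the three displayed formulas of the Proposition verbatim. I expect the main obstacle to be exactly this bookkeeping — choosing $H_m$ as the right integrating factor, tracking the doubling exponents $R^{2^k-1}$ against the Chebyshev products $\prod T^j(\tau)$, and checking that the telescoped sum collapses to the advertised shape — since everything upstream is a mechanical reading of Lemma \ref{lemma3.3}.

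Finally, the degenerate locus $\{z_1z_2-z_0z_3=0\}$, i.e. $R=0$, I would treat separately, because there $R(F^kz)=R^{2^k}=0$ for all $k$ and the transition rules collapse to $W\mapsto W^2$ and $P\mapsto PW$, $Q\mapsto QW$, $\widetilde{S}\mapsto\widetilde{S}\,W$. Then $W(F^kz)=W^{2^k}$ and each of $P_m,Q_m,\widetilde{S}_m$ becomes a bare product $(\,\cdot\,)\prod_{k=0}^{m-1}W^{2^k}=(\,\cdot\,)\,W^{2^m-1}$; taking $m=n+1$ gives the three degenerate formulas with common factor $(z_0^2-z_1^2+z_3^2-z_2^2)^{2^{n+1}-1}$. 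Since each $F^{n+2}_i$ is a polynomial in $z$, the closed forms — written with denominators only for readability — can be regarded as polynomial identities established on the Zariski-dense open set where $R$ and all the $T^j(\tau(z))$ are nonzero and then extended by continuity; alternatively a direct induction on $n$, with base case the explicit computation of $F^2$, makes each formula rigorous.
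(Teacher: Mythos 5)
Your argument is correct and rests on the same ingredients as the paper's proof: the one-step identities of Lemma \ref{lemma3.3}, the semiconjugacy $\tau(F(z))=T(\tau(z))$, and the relation $R(F^kz)=R^{2^k}$, with the inhomogeneous linear recursions for $P$ and $\widetilde S$ handled by normalizing and telescoping (the paper normalizes by $(z_1z_2-z_0z_3)^{2^{m}}$ and verifies the closed form by induction, whereas you use the integrating factor $H_m$ to derive it, but this is only a presentational difference). Your treatment of the locus $R=0$ and the remark about extending from the dense open set where $H_m\neq 0$ likewise match what the paper does implicitly.
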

\begin{proof}
For convenience, denote $F^n(z)=(F^n_0(z),F^n_1(z),F^n_2(z),F^n_3(z))$ by  $z^{(n)}=(z^{(n)}_0,z^{(n)}_1,z^{(n)}_2,z^{(n)}_3)$. We prove the proposition by induction on $n$. Firstly, it is easy to see that  when $n=0$, the proposition is exactly the Lemma \ref{lemma3.3}. And for the case $z\in\{z\in \mathds{C}^4 ~|~ z_1z_2-z_0z_3=0\}$, by Lemma \ref{lemma3.3},  we have $z^{(n)}_1z^{(n)}_2-z^{(n)}_0z^{(n)}_3=0$ for all $n\geq 0$, then a simple recursive computation yields the conclusion.

For the case $z\notin\{z\in \mathds{C}^4 ~|~ z_1z_2-z_0z_3=0\}$,  by the formula $(3.6)$, we have
$$z^{(n)}_1z^{(n)}_2-z^{(n)}_0z^{(n)}_3=(z_1z_2-z_0z_3)^{2^{n}} ~~~~for~~all~~n\geq 0.$$
Assume the proposition holds for $k=n$, i.e. the formula $(3.9),(3.10),(3.11)$ holds for $k=n$,  then for $k=n+1$, by formula $(3.2)$ and $(3.7)$, we have
\begin{align*}
  z^{(n+1+2)}_0 &=(z_0^{(n+2)})^2-(z_1^{(n+2)})^2 ~~~\\
   &= \left((z^{(n+1)}_0)^2-(z^{(n+1)}_1)^2\right)\left( (z^{(n+1)}_0)^2-(z^{(n+1)}_1)^2+(z^{(n+1)}_3)^2-(z^{(n+1)}_2)^2\right)- \left(z^{(n+1)}_1z^{(n+1)}_2-z^{(n+1)}_0z^{(n+1)}_3  \right)^2\\
   &=z^{(n+2)}_0\left(z^{(n+1)}_1z^{(n+1)}_2-z^{(n+1)}_0z^{(n+1)}_3\right)\cdot 2\tau(z^{(n+1)})-\left(z^{(n+1)}_1z^{(n+1)}_2-z^{(n+1)}_0z^{(n+1)}_3  \right)^2 \\
   &= (z_1z_2-z_0z_3)^{2^{n+1}}\cdot \left(z^{(n+2)}_0\cdot2\tau(z^{(n+1)})-(z_1z_2-z_0z_3)^{2^{n+1}} \right)\\
   &= (z_1z_2-z_0z_3)^{2^{n+1}}\cdot \left(z^{(n+2)}_0 \cdot2T^{n+1}(\tau(z))-(z_1z_2-z_0z_3)^{2^{n+1}} \right).
\end{align*}
By the inductive hypothesis, we have
\begin{align*}
   \frac{z^{(n+1+2)}_0}{(z_1z_2-z_0z_3)^{2^{n+2}}}&=\frac{z^{(n+2)}_0}{(z_1z_2-z_0z_3)^{2^{n+1}}} \cdot2T^{n+1}(\tau(z))-1  \\
   & =\left( 2^{n+1}\prod_{k=0}^{n} T^k(\tau(z))\right) \left(\frac{z_0^2-z_1^2}{z_1z_2-z_0z_3}- \sum_{k=1}^{n+1} \frac{1}{2^k \prod_{j=0}^{k-1} T^j(\tau(z))} \right)\cdot2T^{n+1}(\tau(z))-1\\
   &=\left( 2^{n+2}\prod_{k=0}^{n+1} T^k(\tau(z))\right) \left(\frac{z_0^2-z_1^2}{z_1z_2-z_0z_3}- \sum_{k=1}^{n+1} \frac{1}{2^k \prod_{j=0}^{k-1} T^j(\tau(z))} \right)-1\\
   &=\left( 2^{n+2}\prod_{k=0}^{n+1} T^k(\tau(z))\right) \left(\frac{z_0^2-z_1^2}{z_1z_2-z_0z_3}- \sum_{k=1}^{n+2} \frac{1}{2^k \prod_{j=0}^{k-1} T^j(\tau(z))} \right).
\end{align*}
That is, $(3.9)$ holds for $k=n+1$. By a similar argument about $z_i^{(n+1+2)},i =1,2,3$, we can show that $(3.10),(3.11)$ also hold for $k=n+1$. Hence we complete the proof.
\end{proof}

  Proposition \ref{prop3.5} implies that  a point $z\in F^{-(n+2)}(L)$ if and only if

$$ z_1z_2-z_0z_3\neq0 ~~and~~ (z_0^2-z_1^2+z_2^2-z_3^2+2(z_0z_2-z_1z_3))\prod_{k=0}^n T^k(\tau(z))=0 $$
or
$$ z_1z_2-z_0z_3=0~~and~~ z_0^2-z_1^2+z_3^2-z_2^2=0 .$$
Note that $z_0^2-z_1^2+z_2^2-z_3^2+2(z_0z_2-z_1z_3)=(z_0+z_2+z_1+z_3)(z_0+z_2-z_1-z_3)$  and
$$z_0^2-z_1^2+z_3^2-z_2^2-2(z_1z_2-z_0z_3)=(z_0+z_3+z_1+z_2)(z_0+z_3-z_1-z_2),$$
$$z_0^2-z_1^2+z_3^2-z_2^2+2(z_1z_2-z_0z_3)=(z_0-z_3+z_1-z_2)(z_0-z_3-z_1+z_2),$$
hence the curve $z_0^2-z_1^2+z_2^2-z_3^2+2(z_0z_2-z_1z_3)=0$ is contained in $\bigcup_{x\in [-1,1]}\{z\in \mathbb{C}^4 : z_0^2+z_3^2-z_1^2-z_2^2+2x(z_0z_3-z_1z_2)=0\}$. Since $\bigcup_{n=1}^\infty Z(T^n)$ is dense    in $[-1,1]$, where $Z(T^n)$ denotes the zero set of $T^n$,
 we have
$$\overline{\bigcup_{n\geq 0} F^{-n}(L)}=\bigcup_{x\in [-1,1]}\{z\in \mathbb{C}^4 : z_0^2+z_3^2-z_1^2-z_2^2+2x(z_0z_3-z_1z_2)=0\}= P(A_\pi).$$

\vspace{0.2cm}
Now we are equipped to describe the dynamical  properties of $F$ on  complex projective space.
\begin{theorem}\label{thm3.6}
 For  $D_\infty$ and the associated map $F$ defined in $(3.8)$ , we have $\mathcal{J}(F)=p(A_\pi)$.
\end{theorem}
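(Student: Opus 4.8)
The plan is to transfer the whole problem to the one-variable Chebyshev dynamics through the semiconjugacy $\tau$ of Proposition \ref{prop3.4}, for which $\mathcal{J}(T)=[-1,1]$ is known explicitly (Lemma \ref{T}). First I would rewrite the spectrum in terms of $\tau$. Since $\tau(z)=\frac{z_0^2+z_3^2-z_1^2-z_2^2}{2(z_1z_2-z_0z_3)}$, the equation $z_0^2+z_3^2-z_1^2-z_2^2+2x(z_0z_3-z_1z_2)=0$ says exactly $\tau(z)=x$ when $z_1z_2-z_0z_3\neq0$, while the points with $z_1z_2-z_0z_3=0$ lying in the spectrum for some $x$ are precisely those of $E$. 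Together with the established identity $P(A_\pi)=\bigcup_{x\in[-1,1]}\{z_0^2+z_3^2-z_1^2-z_2^2+2x(z_0z_3-z_1z_2)=0\}$ and with $\mathcal{J}(T)=[-1,1]$, this gives the clean description $p(A_\pi)=E\cup\tau^{-1}(\mathcal{J}(T))$, where $\tau$ is regarded as a map $\mathbb{P}^3\setminus E\to\hat{\mathbb{C}}$. Because $E\subset\mathcal{J}(F)$ always holds, it then suffices to prove, for $z\in\mathbb{P}^3\setminus E$, that $z\in\mathcal{F}(F)$ if and only if $\tau(z)\in\mathcal{F}(T)$; equivalently, writing $\Omega:=\tau^{-1}(\hat{\mathbb{C}}\setminus[-1,1])$, that $\mathcal{F}(F)\cap(\mathbb{P}^3\setminus E)=\Omega$.

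For the inclusion $\Omega\subset\mathcal{F}(F)$, which yields $\mathcal{J}(F)\subseteq p(A_\pi)$, I would invoke the explicit iterate of Proposition \ref{prop3.5}. On $\Omega$ one has $\tau(z)\in\mathcal{F}(T)$, so $|T^j(\tau(z))|\to\infty$ locally uniformly by Lemma \ref{T}; hence the series $S_n(z)=\sum_{k=1}^{n}\frac{1}{2^k\prod_{j=0}^{k-1}T^j(\tau(z))}$ converges locally uniformly. Clearing the common scalar factor in Proposition \ref{prop3.5} and multiplying the homogeneous coordinates by $z_1z_2-z_0z_3$, the projective iterate $F^n(z)$ is represented by coordinates that depend on $n$ only through $(z_1z_2-z_0z_3)S_n(z)$; this also cancels the apparent pole along $\{z_1z_2-z_0z_3=0\}$ and is consistent with the second case of Proposition \ref{prop3.5}. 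Thus $F^n\to\Phi$ locally uniformly on $\Omega$ for an explicit holomorphic $\Phi$, so $\{F^n\}$ is a normal family on $\Omega$ and $\Omega\subset\mathcal{F}(F)$.

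The converse $\tau^{-1}([-1,1])\setminus E\subset\mathcal{J}(F)$, giving $p(A_\pi)\subseteq\mathcal{J}(F)$, is where the main difficulty lies, since I must transfer non-normality from $T$ through the several-variable map $\tau$. Fix $z_0\notin E$ with $w_0:=\tau(z_0)\in\mathcal{J}(T)$ and suppose toward a contradiction that $z_0\in\mathcal{F}(F)$, with Fatou neighborhood $U$ satisfying $\overline{U}\cap E=\emptyset$. Since $\tau$ is holomorphic and non-constant it is open, so $\tau(U)$ is a neighborhood of $w_0$; combined with the density of $\bigcup_m Z(T^m)$ in $[-1,1]$ from Lemma \ref{T}, I can find points $z''\in U$ arbitrarily close to $z_0$ with $\tau(z'')$ a zero of some $T^m$, whose forward $T$-orbit is eventually frozen at the behaviour $\tau=1$, in contrast with neighbouring points $z'\in\Omega\cap U$ whose orbits escape, $\tau(F^n(z'))\to\infty$. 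Using the explicit form of $F^n$ from Proposition \ref{prop3.5}, these two families of nearby points have forward orbits accumulating at distinct points of $\mathbb{P}^3$, contradicting the equicontinuity of $\{F^n\}$ at $z_0$; hence $z_0\in\mathcal{J}(F)$. The genuinely delicate point---and the one I would spend the most care on---is that $\tau$ is defined only off $E$ and is not uniformly continuous near $E$, so the soft ``open map plus local preimage'' transfer of non-normality must be backed up by the explicit iterate formula and by Lemma \ref{T}. Combining the two inclusions with $E\subset\mathcal{J}(F)$ and $p(A_\pi)=E\cup\tau^{-1}(\mathcal{J}(T))$ yields $\mathcal{J}(F)=p(A_\pi)$.
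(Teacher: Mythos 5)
Your reduction of the theorem to the statement $p(A_\pi)=E\cup\tau^{-1}([-1,1])$ and your proof of the inclusion $\mathcal{J}(F)\subseteq p(A_\pi)$ follow the paper exactly: both use Proposition \ref{prop3.5}, the normal convergence of the tail sums $\sum_k \bigl(2^k\prod_{j<k}T^j(\tau(z))\bigr)^{-1}$ on $\tau^{-1}(\hat{\mathbb{C}}\setminus[-1,1])$ guaranteed by Lemma \ref{T}, and the resulting normal convergence of $\{F^n\}$ there. Where you genuinely diverge is the converse. The paper evaluates the sum in closed form, $f_n(z)=\cos w-\sin w\cot(2^{n+1}w)$ for $\tau(z)=\cos w$ with $w/\pi$ non-dyadic, and derives non-normality from the density of the doubling orbit $\{2^kw \bmod \pi\}$; you instead run an orbit-separation argument, pitting points $z''$ with $\tau(z'')\in Z(T^m)$ (whose iterates, as one checks from Proposition \ref{prop3.5}, land exactly on the fixed point $[1:0:0:-1]$ for $n\ge m+2$) against nearby escaping points $z'\in\Omega$ whose iterates converge to the limit map $\Phi$. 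This is a legitimate and arguably more conceptual mechanism, and it avoids the paper's implicit restriction to parameters $w$ with dense doubling orbit. The cost is a quantitative step you only assert: you need the two accumulation sets to be separated by a \emph{fixed} $\epsilon_0$ uniformly over the neighborhood $U$, and this fails pointwise on the locus $\{z_0z_2-z_1z_3=0\}$. For instance at $[1:0:0:1]\in p(A_\pi)\setminus E$ the normalized limit $\Phi$ degenerates to $[0:0:0:0]$, so the escaping orbits need not stay away from $[1:0:0:-1]$ there. This is repairable — off that hypersurface the middle coordinates of $\Phi$ are bounded below and the separation is uniform, and since such points are dense in $p(A_\pi)$ and $\mathcal{J}(F)$ is closed, the full inclusion follows — but you should say so explicitly; note that the paper's own density-plus-closedness step plays the analogous role. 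Also, the delicate issue is not the behaviour of $\tau$ near $E$ (you may take $\overline{U}\cap E=\emptyset$), but precisely this degeneration of the limit normalization.
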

\begin{proof}
 It is easy to see that $z\in p^c(A_\pi)$ if and only if $\tau(z)\notin [-1,1]$. Assume that $\tau(z)\notin [-1,1]$, then by Lemma \ref{T}, $T^n(\tau(z))\notin[-1,1]$ for all $n\geq 1$.
 By Proposition \ref{prop3.5}, one can write
\begin{equation}
  F^{n+2}(z)=[\frac{z_0^2-z_1^2}{z_1z_2-z_0z_3}-f_n(z): \frac{z_0z_2-z_1z_3}{(z_1z_2-z_0z_3)} :\frac{z_0z_2-z_1z_3}{(z_1z_2-z_0z_3)}:\frac{z_2^2-z_3^2}{z_1z_2-z_0z_3}+f_n ],
\end{equation}
where
\begin{equation}
  f_n(z)=\sum_{k=1}^{n+1} \frac{1}{2^k \prod_{j=0}^{k-1} T^j(\tau(z)) }~, ~n\geq 0~,~z\in p^c(A_\pi).
\end{equation}
Since the iteration sequence $\{T^n \}$ converges normally to $\infty$ on the
Fatou set $\hat{\mathbb{C}}\setminus [-1,1]$, we get that $f_n$ converges normally to some $f$ on $p^c(A_\pi)$, hence $F^n$ converge normally on $p^c(A_\pi)$, that is, $p(A_\pi)\supset \mathcal{J}(F)$.  In fact, if we write $\tau(z)=\cos w$ for some $w\notin \mathbb{R} $, as $T^k(\cos w)=\cos (2^k w)$, then the limit function has the following expression:
\begin{equation}
  f(z)=\sum_{k=1}^\infty \frac{\sin w}{\sin (2^k w)}.
\end{equation}
Moreover, in \cite{BY}, it was proved that $f(z)=\tau(z)-i\sqrt{1-\tau(z)^2}, \tau(z)\notin[-1,1]$.

For the converse,  assume $\tau(z)=\cos w $ for some real number $w$ such that $\frac{w}{\pi}$ is non-dyadic, i.e., $\frac{w}{\pi}$ is not the form of $\frac{q}{2^p}$, then we have
\begin{align*}
  f_n(z) & =\sum_{k=1}^{n+1} \frac{1}{2^k \prod_{j=0}^{k-1} T^j(\tau(z))}=\sum_{k=1}^{n+1} \frac{\sin w}{\sin (2^k w)} \\
   & =\sin w \left(\sum_{k=1}^{n+1} \cot(2^{k-1}w)-\cot(2^k w) \right)\\
   &=\cos w-\sin w \cot(2^{n+1}w).
\end{align*}
Since   $\{~ 2^k w \mod \pi \mid k=0,1,\cdots~\}$ is dense in $[0,\pi]$ and the union of all real numbers $w$ such that $\frac{w}{\pi}$ is non-dyadic is also dense in $\mathbb{R}$, there is no subsequence of $\{\cos w-\sin w \cot(2^{n+1}w)\}$ which can converge on some neighborhood of $w\in \mathbb{R}$. It implies that there is no subsequence of $\{f_n\}$ which can converge on some neighborhood of $z\in \tau^{-1}([-1,1])\setminus E$ since $\tau(z)$ is continuous on $\mathbb{P}^3\setminus E$. Therefore we get that $\{z\in \mathbb{P}^3\setminus E: \tau(z)\in [-1,1]\}$ is contained in $\mathcal{J}(F)$. Combine with the extended indeterminacy set $E$, we have $p(A_\pi)\subset \mathcal{J}(F)$.
\end{proof}

\section{The spectral dynamics of the Lamplighter group $\mathcal{L}$}
\subsection{Some properties of the Chebyshev polynomial of the second kind}

Let $\{U_n\}_{n=0}^\infty$ be the sequence of the $n$-th Chebyshev polynomial of the second kind defined by
\begin{equation}
  U_{n+1}(z)=2z U_n(z)-U_{n-1}(z),~~U_1(z)=2z,~~U_0(z)=1.
\end{equation}
It has the following well-known properties:
\begin{equation}
  U_n(\cos z)=\frac{\sin (n+1)z}{\sin z}, ~~U_n(1)=n+1,~~U_n(-1)=(-1)^n (n+1),
\end{equation}
\begin{equation}\label{L.3}
  U_n(z)=\sum_{k=0}^{[\frac{n}{2}]} (-1)^k \frac{(n-k)!}{k! (n-2k)!} (2z)^{n-2k},
\end{equation}
and $\{U_n(x)\}$ is the orthogonal polynomials on $[-1,1]$ associated to the measure $\sqrt{1-x^2}dx$, that is
\begin{equation}
  \int_{-1}^1 U_m(x) U_n(x) \sqrt{1-x^2} dx =\delta_{mn}.
\end{equation}
By the theory of orthogonal polynomials (for more details see \cite{B.S}), we know that the $U_n$ has exact $n$  simple zeros in the interval $[-1,1]$, and the zeros of $U_n,  U_{n+1}$ strictly interlace.

The following lemma will be useful for us.
\begin{lemma}\label{5.1}
For $z\notin [-1,1]$,
\begin{equation}
  \lim_{n\rightarrow \infty} \frac{U_n(z)}{U_{n+1}(z)}=z-\sqrt{z^2-1}.
\end{equation}
Moreover, if we write $z=\cos w$ with $Im(w)>0 $ , then we can write the above limit property in an elegant way:
\begin{equation}
  \lim_{n\rightarrow \infty} \frac{U_n(\cos w)}{U_{n+1}(\cos w)}=  e^{iw}.
\end{equation}

\end{lemma}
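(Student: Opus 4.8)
The plan is to solve the three-term recurrence explicitly and read off the limit from the resulting Binet-type formula. Since $U_{n+1}(z)=2zU_n(z)-U_{n-1}(z)$ is a linear recurrence whose coefficients do not depend on $n$, I would first pass to its characteristic equation $t^2-2zt+1=0$, whose roots
\[
t_{\pm}=z\pm\sqrt{z^2-1}
\]
satisfy $t_+t_-=1$. Because $z\notin[-1,1]$ forces $z\ne\pm1$, we have $t_+\ne t_-$, so the general solution is a linear combination of $t_+^{\,n}$ and $t_-^{\,n}$; matching the initial data $U_0=1$ and $U_1=2z$ (equivalently the convention $U_{-1}=0$) yields the closed form
\[
U_n(z)=\frac{t_+^{\,n+1}-t_-^{\,n+1}}{t_+-t_-}.
\]
I would verify this directly against the recurrence and the two initial values, so that no appeal to the trigonometric representation is needed at this stage.

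The analytic heart of the argument is that for $z\notin[-1,1]$ the two roots have \emph{distinct} moduli. Indeed, from $t_+t_-=1$ we get $|t_+|\,|t_-|=1$, so $|t_+|=|t_-|$ would force $|t_{\pm}|=1$; writing $t_{\pm}=e^{\pm i\theta}$ then gives $z=\tfrac12(t_++t_-)=\cos\theta\in[-1,1]$, contradicting the hypothesis. Hence I may normalize the branch of the square root so that $|t_-|<1<|t_+|$, and then $r:=t_-/t_+$ satisfies $|r|<1$. Substituting the closed form,
\[
\frac{U_n(z)}{U_{n+1}(z)}=\frac{t_+^{\,n+1}-t_-^{\,n+1}}{t_+^{\,n+2}-t_-^{\,n+2}}=\frac{1}{t_+}\cdot\frac{1-r^{\,n+1}}{1-r^{\,n+2}}\longrightarrow\frac{1}{t_+}=t_-
\]
as $n\to\infty$, since $r^{\,n}\to0$. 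Thus the limit equals the root of smaller modulus, which with the chosen branch is exactly $t_-=z-\sqrt{z^2-1}$, giving the first assertion.

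For the reformulation in terms of $w$, I would parametrize $z=\cos w$ with $\mathrm{Im}(w)>0$, which is possible because $w\mapsto\cos w$ carries the upper half-plane onto $\mathbb{C}\setminus[-1,1]$. Then the two characteristic roots are $e^{iw}$ and $e^{-iw}$, and since $\mathrm{Im}(w)>0$ we have $|e^{iw}|=e^{-\mathrm{Im}(w)}<1<e^{\mathrm{Im}(w)}=|e^{-iw}|$, so $e^{iw}$ is precisely the root of smaller modulus. By the previous paragraph the limit is this inner root, namely $e^{iw}$, which is the second formula and pins down the branch making $z-\sqrt{z^2-1}=e^{iw}$. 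The step I expect to demand the most care is exactly this branch and labeling bookkeeping, i.e.\ confirming that $z-\sqrt{z^2-1}$, rather than $z+\sqrt{z^2-1}$, is the root inside the unit disk; everything else is routine once the closed form and the distinct-moduli fact are in hand.
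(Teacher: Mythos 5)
Your argument is correct and is essentially the paper's proof in different clothing: the paper invokes the identity $U_n(\cos w)=\sin((n+1)w)/\sin w$ and expands the sines into exponentials, which is exactly your Binet formula $U_n=(t_+^{\,n+1}-t_-^{\,n+1})/(t_+-t_-)$ with $t_\pm=e^{\mp iw}$, and then lets the dominant exponential win. Your derivation of the closed form from the characteristic equation and your explicit check that the two roots have distinct moduli (hence which branch of $\sqrt{z^2-1}$ is meant) is a slightly more self-contained and careful version of the same computation.
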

\begin{proof}
Let $w=x+iy, y> 0$, then it is obtained by a direct computation:
\begin{align*}
   \lim_{n\rightarrow \infty} \frac{U_n(\cos w)}{U_{n+1}(\cos w)} &= \lim_{n\rightarrow \infty} \frac{\sin nw}{\sin (n+1)w} \\
   & =\lim_{n\rightarrow \infty} \frac{e^{in(x+iy)}-e^{-in(x+iy)}}{e^{i(n+1)(x+iy)}-e^{-i(n+1)(x+iy)}}\\
&=\lim_{n\rightarrow \infty} \frac{e^{-ny}e^{inx}-e^{ny}e^{-inx}}{e^{-(n+1)y}e^{i(n+1)x}-e^{(n+1)y}e^{-i(n+1)x}}\\
&=e^{i w}.
\end{align*}
\end{proof}

\subsection{The determinant recursion for the Lamplighter group $\mathcal{L}$.  }

For the lamplighter group $\mathcal{L}$, we have the following operator recursions:
$$c=a^{-1}b=b^{-1}a=\left(
                      \begin{array}{cc}
                        0 & I \\
                        I & 0 \\
                      \end{array}
                    \right),
~~a=\left(
      \begin{array}{cc}
        0 & a \\
        b & 0 \\
      \end{array}
    \right),
~~b=\left(
      \begin{array}{cc}
        a & 0 \\
        0 & b \\
      \end{array}
    \right),
$$
$$a^{-1}=\left(
           \begin{array}{cc}
             0 & b^{-1} \\
             a^{-1} & 0 \\
           \end{array}
         \right),
~~b^{-1}=\left(
           \begin{array}{cc}
             a^{-1} & 0 \\
             0 & b^{-1}\\
           \end{array}
         \right).
$$
Hence for the pencil $D_{\pi_n}(z)=z_0+z_1c_n+z_2(a_n+b_n)+z_3(a_n^{-1}+b_n^{-1})$, we have
\begin{align*}
  \det(D_{\pi_{n+1}}(z)) & =\det \left(
                          \begin{array}{cc}
                            z_0+z_2a_n+z_3a_n^{-1} & z_1+z_2a_n+z_3b_n^{-1} \\
                            z_1+z_2b_n+z_3a_n^{-1} & z_0+z_2b_n+z_3b_n^{-1} \\
                          \end{array}
                        \right)\\
  & =\det \left(
                          \begin{array}{cc}
                            z_0-z_1+z_3(a_n^{-1}-b_n^{-1}) & z_1+z_2a_n+z_3b_n^{-1} \\
                            z_1-z_0+z_3(a_n^{-1}-b_n^{-1}) & z_0+z_2b_n+z_3b_n^{-1} \\
                          \end{array}
                        \right)\\
   &=\det \left(
                          \begin{array}{cc}
                            2(z_0-z_1) & z_1-z_0+z_2(a_n-b_n) \\
                            z_1-z_0+z_3(a_n^{-1}-b_n^{-1}) & z_0+z_2b_n+z_3b_n^{-1} \\
                          \end{array}
                        \right)\\
   &=\det\left(2(z_0-z_1)(z_0+z_2b_n+z_3b_n^{-1})-(z_1-z_0+z_3(a_n^{-1}-b_n^{-1}))(z_1-z_0+z_2(a_n-b_n))\right)\\
   &=\det \left((z_0^2-z_1^2-2z_2z_3)+2z_2z_3c_n+z_2(z_0-z_1)(a_n+b_n)+z_3(z_0-z_1)(a_n^{-1}+b_n^{-1})\right)\\
   &=\det\left(D_{\pi_n}(F(z))\right)=\cdots =\det\left(D_{\pi_0}(F^{n+1}(z))\right),
\end{align*}
where  $F: \mathbb{C}^4\rightarrow \mathbb{C}^4 $ is the homogenous polynomial map defined by $$F(z_0,z_1,z_2,z_3)= (~z_0^2-z_1^2-2z_2z_3~,~2z_2z_3~,~ z_2(z_0-z_1)~,~z_3(z_0-z_1)~).$$
Hence $\det(D_{\pi_{n+1}}(z))=0$ if and only if $F^{n+1}(z)\in L$, where $L$ is the hyperplane $\{z\in \mathbb{C}^4 : z_0+z_1+2z_2+2z_3=0\}$. It implies that $P(D_\pi)\supset \overline{\bigcup_{n\geq 0} F^{-n}(L)}$.
\begin{lemma}
\[ \overline{\bigcup_{n\geq 0} F^{-n}(L)}=\overline{\bigcup_{n\geq 0} F^{-n}(\{z_0-z_1=0\})}\bigcup L=L\bigcup \phi^{-1}(E), \]
where $E$ is the indeterminacy set of $F$ and $\phi$ is the cannonical projection from $\mathbb{C}^{4}$ to $\mathbb{P}^3$.
\end{lemma}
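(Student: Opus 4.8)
The plan is to isolate the one clean algebraic fact that drives everything, namely $F^{-1}(L)=V\cup L$ with $V:=\{z_0-z_1=0\}$, and then to turn the two asserted equalities into a bookkeeping of iterated preimages. For that fact I would pull the linear form $\ell(z)=z_0+z_1+2z_2+2z_3$ defining $L$ back through $F$ and observe that it factors:
\[
\ell(F(z))=(z_0^2-z_1^2-2z_2z_3)+2z_2z_3+2(z_0-z_1)(z_2+z_3)=(z_0-z_1)(z_0+z_1+2z_2+2z_3)=(z_0-z_1)\,\ell(z).
\]
Hence $F^{-1}(L)=\{z_0-z_1=0\}\cup\{\ell=0\}=V\cup L$. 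A one-line induction then gives $F^{-n}(L)=L\cup\bigcup_{k=0}^{n-1}F^{-k}(V)$, so that $\bigcup_{n\ge0}F^{-n}(L)=L\cup\bigcup_{k\ge0}F^{-k}(V)$; since $L$ is closed and $\overline{A\cup B}=\overline A\cup\overline B$, passing to closures produces the first equality.

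For the second equality I would compare the preimages of $V$ with the preimages of the origin $O$, recalling that in terms of the canonical lift one has $\phi^{-1}(E)=\overline{\bigcup_{m\ge1}F^{-m}(O)}$. The decisive observation is that $F$ drives the whole hyperplane $V$ into $O$ in at most two steps: on $V$ one has $z_0-z_1=0$, so $F(z)=(-2z_2z_3,\,2z_2z_3,\,0,\,0)$. Thus $V\cap\{z_2z_3=0\}\subseteq F^{-1}(O)$, while for $z_2z_3\neq0$ the image is the point $[1:-1:0:0]$, and $F(1,-1,0,0)=O$. Consequently $V\subseteq F^{-1}(O)\cup F^{-2}(O)$, which is an \emph{honest} inclusion with no genericity caveat.

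With these ingredients the two inclusions are short. For $\phi^{-1}(E)\subseteq\bigl(\bigcup_nF^{-n}(V)\bigr)\cup L$ I would induct on $m$ to show $F^{-m}(O)\subseteq\bigl(\bigcup_{n\ge0}F^{-n}(V)\bigr)\cup L$: the base case is the direct solution of $F(z)=0$, giving $F^{-1}(O)=\{z_0=z_1,z_2=0\}\cup\{z_0=z_1,z_3=0\}\cup\{z_0=-z_1,z_2=z_3=0\}$, whose first two components lie in $V$ and whose third lies in $L$; the inductive step applies $F^{-1}$ and uses both $F^{-1}(L)=V\cup L$ and $F^{-1}\bigl(\bigcup_{n\ge0}F^{-n}(V)\bigr)=\bigcup_{n\ge1}F^{-n}(V)$. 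For the reverse inclusion, pulling $V\subseteq F^{-1}(O)\cup F^{-2}(O)$ back by $F^{-n}$ yields $F^{-n}(V)\subseteq F^{-(n+1)}(O)\cup F^{-(n+2)}(O)$, whence $\bigcup_{n\ge0}F^{-n}(V)\subseteq\bigcup_{m\ge1}F^{-m}(O)$. Taking closures of the two inclusions squeezes $\bigl(\bigcup_nF^{-n}(V)\bigr)\cup L$ between $\phi^{-1}(E)\cup L$ and itself, giving the second equality.

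The main obstacle is not any single computation but keeping the indeterminacy bookkeeping consistent with the closures, since preimage does not commute with closure: I would therefore establish all the set-theoretic inclusions among the bare unions $\bigcup F^{-n}(V)$ and $\bigcup F^{-m}(O)$ \emph{before} taking any closure, which is exactly why the genericity-free inclusion $V\subseteq F^{-1}(O)\cup F^{-2}(O)$ is what makes the argument go through. A secondary point I would flag explicitly is that $E$ here must be read as the extended indeterminacy set $\overline{\bigcup_m\phi(F^{-m}(O))}$ and not the bare $I(F)$, since the hyperplane $V$ meets the indeterminacy locus only in lower dimension and enters $\phi^{-1}(E)$ precisely through the two-step collapse $F^2(V)=O$.
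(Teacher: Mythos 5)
Your proposal is correct and follows essentially the same route as the paper: the factorization $\ell(F(z))=(z_0-z_1)\ell(z)$ giving $F^{-1}(L)=V\cup L$, the telescoping $F^{-n}(L)=L\cup\bigcup_{k<n}F^{-k}(V)$, and the two-step collapse $F^2(V)=O$ linking $V$ to the extended indeterminacy set $\phi^{-1}(E)=\overline{\bigcup_m F^{-m}(O)}$. The only difference is that you prove the inclusion $\phi^{-1}(E)\subseteq L\cup\bigcup_n F^{-n}(V)$ by explicitly solving $F(z)=O$ and inducting, where the paper leaves this direction to the (even quicker) observation that $O\in V$ forces $F^{-m}(O)\subseteq F^{-m}(V)$; your version is a harmless elaboration of the same idea.
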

\begin{proof}
First, it is easy to see that $F(z)\in L$ if and only if $z_0-z_1=0$ or $z\in L$, that is, $F^{-1}(L)=L\cup \{z\in \mathbb{C}^{4}|z_0-z_1=0\}$. Let $X=\{z\in \mathbb{C}^{4}|z_0-z_1=0\}$, then
\[ F^{-n}(L)=F^{-(n-1)}(L\cup X)=F^{-(n-1)}(X)\cup F^{-(n-1)}(L)=\cdots=\left(\bigcup_{i=0}^{n-1}F^{-i}(X)\right)\bigcup L. \]
Next, note that $F^2(X)=O$, we have $F^{-n}(X)\subset \phi^{-1}(I_{n+2})$, so
\[ \overline{\bigcup_{n\geq 0} F^{-n}(L)}=\overline{\bigcup_{n\geq 0} F^{-n}(X)}\bigcup L \subset \overline{\bigcup_{n\geq 0} \phi^{-1}(I_{n+2})} \bigcup L=L\bigcup \phi^{-1}(E). \]
For the converse, recall that
\[ \phi^{-1}(E)=\overline{\bigcup_{n\geq 0} F^{-n}(O)}. \]
\end{proof}
Indeed, we have proved that
\[ \phi^{-1}(E)=\overline{\bigcup_{n\geq 0} F^{-n}(X)}. \]
For further calculation of the indeterminacy set, let $Q$ be the rational map associated with $F$, this is,
$$Q(z)=[~z_0+z_1-\frac{2z_2z_3}{z_0-z_1}~:~ \frac{2z_2z_3}{z_0-z_1}~:~ z_2~:~z_3~],$$
and for simplicity, write  $z^{(n)}=(z_0^{(n)},z_1^{(n)},z_2^{(n)},z_3^{(n)})=Q^n(z)$.

Since $z_2^{(n)}=z_2, z_3^{(n)}=z_3, z_0^{(n)}+z_1^{(n)}=z_0+z_1$ for all $n$, we have
\begin{equation}\label{4.7}
  z_0^{(n+1)}-z_1^{(n+1)}=z_0+z_1- \frac{4z_2z_3}{z_0^{(n)}-z_1^{(n)}}.
\end{equation}
If we set
\begin{equation}\label{z_n}
  z_0^{(n)}-z_1^{(n)}=\frac{G_n(z)}{H_n(z)}, ~~~~~~~n\geq 0
\end{equation}
with $G_0(z_0,z_1,z_2,z_3)=z_0-z_1, ~H_0(z_0,z_1,z_2,z_3)=1$, then by \eqref{4.7}, we have the following recursive relation:
\begin{equation}\label{L.8}
  \left(
    \begin{array}{c}
      G_{n+1} \\
      H_{n+1} \\
    \end{array}
  \right)
=\left(
   \begin{array}{cc}
     z_0+z_1 & -4z_2z_3 \\
     1 & 0 \\
   \end{array}
 \right)
\left(
    \begin{array}{c}
      G_{n} \\
      H_{n} \\
    \end{array}
  \right), ~~~~~~~n\geq 0.
\end{equation}
As $H_n=G_{n-1},~ n\geq 1$, the sequence ${G_n}$ has the three-term recurrence relation
\begin{equation}
  G_{n+1}(z)=(z_0+z_1)G_n(z)-4z_2z_3G_{n-1}(z).
\end{equation}
The following lemma is inspired by reference \cite{GBS} and the close connection between the orthogonal polynomial and the above three-term recurrence equation.
\begin{lemma}\label{5.2}
For $z\notin \{z_2z_3 = z_0+z_1=0\}$, it holds that
\begin{equation}\label{2.3}
  G_k(z) =(z_0-z_1)(\sqrt{4z_2z_3})^k U_k(\frac{z_0+z_1}{2\sqrt{4z_2z_3}})-(\sqrt{4z_2z_3})^{k+1} U_{k-1}(\frac{z_0+z_1}{2\sqrt{4z_2z_3}})
\end{equation}
for all $k\geq 0$, where we assume that $U_{-1}=0$.
\end{lemma}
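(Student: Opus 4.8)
The plan is to prove the identity by recognizing that both sides satisfy the \emph{same} second‑order linear recurrence with the same first two terms, and to reduce the Chebyshev side to this recurrence using the defining relation of the $U_n$. Throughout I abbreviate $s=\sqrt{4z_2z_3}$ and $\tau=\frac{z_0+z_1}{2\sqrt{4z_2z_3}}$, so that the three‑term recurrence $G_{n+1}(z)=(z_0+z_1)G_n(z)-4z_2z_3G_{n-1}(z)$ derived just above the lemma becomes $G_{n+1}=2s\tau\,G_n-s^2G_{n-1}$, and the claimed right‑hand side reads
\[
V_k:=(z_0-z_1)s^k U_k(\tau)-s^{k+1}U_{k-1}(\tau)=s^k\bigl[(z_0-z_1)U_k(\tau)-sU_{k-1}(\tau)\bigr].
\]
The first task is to pin down the initial data for $G_k$. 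From the matrix recursion \eqref{L.8} together with $G_0=z_0-z_1$ and $H_0=1$ I read off $G_1=(z_0+z_1)(z_0-z_1)-4z_2z_3$. The base cases $k=0,1$ of the formula then follow by direct substitution from $U_{-1}=0$, $U_0=1$, $U_1(\tau)=2\tau$, recalling that $2s\tau=z_0+z_1$ and $s^2=4z_2z_3$; one checks $V_0=z_0-z_1=G_0$ and $V_1=(z_0-z_1)(z_0+z_1)-4z_2z_3=G_1$.

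For the recurrence itself I would isolate the bracketed factor $W_k:=(z_0-z_1)U_k(\tau)-sU_{k-1}(\tau)$, so that $V_k=s^kW_k$, and show that $W_k$ obeys the Chebyshev three‑term relation $W_{k+1}=2\tau W_k-W_{k-1}$. Substituting $U_{k+1}(\tau)=2\tau U_k(\tau)-U_{k-1}(\tau)$ into $W_{k+1}$ and using $U_k(\tau)=2\tau U_{k-1}(\tau)-U_{k-2}(\tau)$ to collapse the surviving $s$‑terms, the $(z_0-z_1)$‑ and $s$‑coefficients regroup exactly into $2\tau W_k-W_{k-1}$. Multiplying back by $s^{k+1}$ converts this into $V_{k+1}=2s\tau\,V_k-s^2V_{k-1}$, which is precisely the recurrence for $G_k$. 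Since $V_k$ and $G_k$ satisfy the same recurrence and agree for $k=0,1$, they coincide for all $k\geq 0$.

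The one point that genuinely deserves care, rather than being a deep obstacle, is the status of the radical $s=\sqrt{4z_2z_3}$, which is needed for the formula to make literal sense yet is only defined up to sign. The resolution is that $V_k$ is in fact a homogeneous polynomial in $z_0,z_1,z_2,z_3$ of degree $k+1$: since $U_k$ has the parity of $k$ by \eqref{L.3}, after clearing the denominator $(2s)$ hidden in $\tau$ only \emph{even} powers of $s$ survive, so $s^2=4z_2z_3$ everywhere replaces the radical and the expression is independent of the chosen branch. I would state this explicitly, since it is exactly what makes the lemma hold as written on the whole complement of $\{z_2z_3=z_0+z_1=0\}$ (in particular on $\{z_2z_3=0\}$, where the literal formula would otherwise be undefined). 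As a cross‑check, one may instead solve the linear recurrence through its characteristic equation $\lambda^2-(z_0+z_1)\lambda+4z_2z_3=0$ and match the resulting Binet‑type expression against $U_k(\cos w)=\frac{\sin(k+1)w}{\sin w}$ under the substitution $z_0+z_1=2s\cos w$; this yields the same identity and confirms the computation. The remaining work is the routine regrouping in the inductive step, which becomes mechanical once the auxiliary sequence $W_k$ is introduced.
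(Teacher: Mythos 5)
Your proof is correct and follows essentially the same route as the paper: both arguments verify the cases $k=0,1$ directly and then show by induction that the right-hand side satisfies the same three-term recurrence $G_{n+1}=(z_0+z_1)G_n-4z_2z_3G_{n-1}$, using the Chebyshev relation $U_{n+1}=2\tau U_n-U_{n-1}$ to regroup terms. Your added remarks on the branch-independence of $\sqrt{4z_2z_3}$ and the Binet-type cross-check correspond, respectively, to the paper's observation immediately following the lemma and to its Lemma 4.4, so they are welcome but not a departure in method.
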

\begin{proof}
It is easy to check that it holds for $k=0,1$, by induction, and the recurrence relation of $G_n$ and $U_n$, we have
\begin{align*}
  G_{n+1}(z) &= (z_0+z_1)G_n(z)-4z_2z_3G_{n-1}(z)\\
   & =(z_0+z_1)\left((z_0-z_1)(\sqrt{4z_2z_3})^n U_n(\frac{z_0+z_1}{2\sqrt{4z_2z_3}})-(\sqrt{4z_2z_3})^{n+1} U_{n-1}(\frac{z_0+z_1}{2\sqrt{4z_2z_3}})\right)\\
&-4z_2z_3\left((z_0-z_1)(\sqrt{4z_2z_3})^{n-1} U_{n-1}(\frac{z_0+z_1}{2\sqrt{4z_2z_3}})-(\sqrt{4z_2z_3})^{n} U_{n-2}(\frac{z_0+z_1}{2\sqrt{4z_2z_3}})\right)\\
&=(z_0-z_1)(\sqrt{4z_2z_3})^{n+1}U_{n+1}(\frac{z_0+z_1}{2\sqrt{4z_2z_3}})-(\sqrt{4z_2z_3})^{n+2}U_n(\frac{z_0+z_1}{2\sqrt{4z_2z_3}}),
\end{align*}
which completes the proof.
\end{proof}
It is worth noting that the function $(\sqrt{4z_2z_3})^n U_n(\frac{z_0+z_1}{2\sqrt{4z_2z_3}})$ is independent of the selection of principal value branch of root function. In fact, by (\ref{L.3}), we have
\begin{align*}
  (\sqrt{4z_2z_3})^n U_n(\frac{z_0+z_1}{2\sqrt{4z_2z_3}}) & =(\sqrt{4z_2z_3})^n \sum_{k=0}^{[\frac{n}{2}]} (-1)^k \frac{(n-k)!}{k! (n-2k)!} (\frac{z_0+z_1}{\sqrt{4z_2z_3}})^{n-2k}  \\
   & =\sum_{k=0}^{[\frac{n}{2}]} (-1)^k \frac{(n-k)!}{k! (n-2k)!} (z_0+z_1)^{n-2k}(4z_2z_3)^k.
\end{align*}

\vspace{0.5cm}

Let
\begin{equation}\label{L.11}
  P_n(x,y)=\sum_{k=0}^{[\frac{n}{2}]} (-1)^k \frac{(n-k)!}{k! (n-2k)!} x^{n-2k}y^k=(\sqrt{y})^n U_n(\frac{x}{2\sqrt{y}}),
\end{equation}
then we can write $G_n$ as
\begin{equation}\label{L.12}
  G_n(z)=(z_0-z_1)P_n(z_0+z_1,4z_2z_3)-4z_2z_3P_{n-1}(z_0+z_1, 4z_2z_3).
\end{equation}
The following lemma gives a more precise description of $P_n$.
\begin{lemma}\label{5.3}
\begin{equation}\label{L.13}
  P_n(\alpha+\beta, \alpha \beta)=\frac{\alpha^{n+1}-\beta^{n+1}}{\alpha-\beta},
\end{equation}
Moreover, it implies an identity:
$$\sum_{k=0}^{[\frac{n}{2}]} (-1)^k \left(
                                      \begin{array}{c}
                                        k \\
                                        n-k \\
                                      \end{array}
                                    \right)
 (x+y)^{n-2k}(xy)^k=\frac{x^{n+1}-y^{n+1}}{x-y}.$$
\end{lemma}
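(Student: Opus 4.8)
The plan is to avoid manipulating the square root $\sqrt{\alpha\beta}$ directly and instead to exploit the fact, recorded just before the statement, that $P_n(x,y)$ is an honest polynomial in $x$ and $y$. First I would show that the family $\{P_n\}$ obeys the clean two-variable three-term recurrence
\begin{equation*}
  P_{n+1}(x,y)=x\,P_n(x,y)-y\,P_{n-1}(x,y),\qquad P_0=1,\ P_1=x.
\end{equation*}
This is immediate from the definition $P_n(x,y)=(\sqrt y)^n U_n\!\left(\tfrac{x}{2\sqrt y}\right)$ in \eqref{L.11} together with the Chebyshev recurrence $U_{n+1}=2zU_n-U_{n-1}$: substituting $z=\tfrac{x}{2\sqrt y}$ and multiplying through by $(\sqrt y)^{n+1}$ turns the factor $2z=\tfrac{x}{\sqrt y}$ into $x$ once one power of $\sqrt y$ is absorbed, while the remaining $(\sqrt y)^2=y$ produces the coefficient of $P_{n-1}$. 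The base cases $P_0=1$ and $P_1=x$ follow from $U_0=1$, $U_1(z)=2z$.

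Next I would fix $\alpha,\beta$, specialize to $x=\alpha+\beta$, $y=\alpha\beta$, and set
\begin{equation*}
  R_n:=\frac{\alpha^{n+1}-\beta^{n+1}}{\alpha-\beta}.
\end{equation*}
The identity \eqref{L.13} is then exactly the assertion $P_n(\alpha+\beta,\alpha\beta)=R_n$, which I would prove by induction on $n$. One checks $R_0=1$ and $R_1=\alpha+\beta$, matching $P_0$ and $P_1$. For the inductive step it suffices to verify that $R_n$ satisfies the very same recurrence, namely that $(\alpha+\beta)R_n-\alpha\beta\,R_{n-1}=R_{n+1}$, which reduces to the elementary cancellation
\begin{equation*}
  (\alpha+\beta)(\alpha^{n+1}-\beta^{n+1})-\alpha\beta(\alpha^{n}-\beta^{n})=\alpha^{n+2}-\beta^{n+2}.
\end{equation*}
Since $P_{\bullet}(\alpha+\beta,\alpha\beta)$ and $R_{\bullet}$ share the same first two terms and the same recurrence, they agree for every $n$.

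The one point that needs a word of care is that $R_n$ is a priori defined only for $\alpha\neq\beta$, whereas $P_n(\alpha+\beta,\alpha\beta)$ is a genuine polynomial in $(\alpha,\beta)$. I would dispose of this by rewriting $R_n=\sum_{j=0}^{n}\alpha^{j}\beta^{n-j}$, so that both sides are polynomials; the identity, established on the Zariski-dense set $\{\alpha\neq\beta\}$, then extends to all $\alpha,\beta$, including the degenerate locus $\alpha=\beta$ where $R_n=(n+1)\alpha^n$. Finally, the ``Moreover'' statement is not an independent fact: substituting $\alpha=x$, $\beta=y$ into \eqref{L.13} and expanding the left-hand side through the explicit coefficient form of $P_n$ in \eqref{L.11} yields precisely the displayed combinatorial identity. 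I expect no genuine obstacle here; the only mild subtlety is the branch/degeneracy bookkeeping just noted, and the whole efficiency of the argument comes from transferring the problem from the square-root-laden $U_n$ to the polynomial recurrence for $P_n$.
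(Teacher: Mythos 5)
Your proof is correct, but it takes a genuinely different route from the paper's. The paper obtains \eqref{L.13} indirectly: it treats the matrix $A$ from \eqref{L.8} as the companion matrix of the recurrence, invokes the closed form $A^n=\alpha^n\frac{A-\beta I}{\alpha-\beta}+\beta^n\frac{A-\alpha I}{\beta-\alpha}$ (citing \cite{J.M}) to get an explicit expression for $G_n$ in terms of $\frac{\alpha^{n+1}-\beta^{n+1}}{\alpha-\beta}$, and then matches that expression against the representation \eqref{L.12} of $G_n$ via $P_n$ and $P_{n-1}$, reading off \eqref{L.13} by comparing coefficients of the independent quantity $z_0-z_1$. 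You instead verify directly that $P_n(\alpha+\beta,\alpha\beta)$ and $\frac{\alpha^{n+1}-\beta^{n+1}}{\alpha-\beta}$ satisfy the same three-term recurrence $u_{n+1}=(\alpha+\beta)u_n-\alpha\beta\,u_{n-1}$ with the same first two terms, the recurrence for $P_n$ being inherited from the Chebyshev recurrence after clearing powers of $\sqrt{y}$. Your argument is more elementary and self-contained: it bypasses the matrix-power formula and the detour through $G_n$ entirely, and your rewriting of the right-hand side as $\sum_{j=0}^{n}\alpha^{j}\beta^{n-j}$ cleanly handles the degenerate locus $\alpha=\beta$, a point the paper leaves implicit. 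What the paper's route buys is economy in context: the matrix computation is wanted anyway for the explicit form of $G_n$ used in the subsequent analysis, so the lemma falls out as a byproduct. One small caveat on the ``Moreover'' clause: with the coefficient $\frac{(n-k)!}{k!\,(n-2k)!}=\binom{n-k}{k}$ from \eqref{L.11}, your substitution does yield the intended identity, but note that the binomial coefficient as printed in the statement has its entries inverted; your reading is the correct one.
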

\begin{proof}
Let $A=\left(
   \begin{array}{cc}
     z_0+z_1 & -4z_2z_3 \\
     1 & 0 \\
   \end{array}
 \right)$ , then by (\ref{L.8}), we have

\begin{equation*}
  \left(
    \begin{array}{c}
      G_{n} \\
      H_{n} \\
    \end{array}
  \right)
=A^n
\left(
    \begin{array}{c}
      G_{0} \\
      H_{0} \\
    \end{array}
  \right).
\end{equation*}
Let $\alpha, \beta$ be the two eigenvalues of matrix $A$, then $\alpha+\beta=z_0+z_1, \alpha\beta=4z_2z_3$. For the $n$-th power of $2\times 2$ matrix $A$, it has a well known concise form (cf.\cite{J.M}):
\begin{equation*}
  A^n=\alpha^n \left(\frac{A-\beta I}{\alpha-\beta} \right)+\beta^n\left(\frac{A-\alpha I}{\beta-\alpha} \right).
\end{equation*}
Hence we have
\begin{equation*}
  A^n=\left(
   \begin{array}{cc}
     \frac{\alpha^{n+1}-\beta^{n+1}}{\alpha-\beta} & \frac{\alpha \beta^{n+1}-\alpha^{n+1}\beta}{\alpha-\beta}\\
     \frac{\alpha^{n}-\beta^{n}}{\alpha-\beta} & \frac{\alpha \beta^{n}-\alpha^{n}\beta}{\alpha-\beta}\\
   \end{array}
 \right).
\end{equation*}
It shows that
\begin{equation}
  G_n(z)=(z_0-z_1)\frac{\alpha^{n+1}-\beta^{n+1}}{\alpha-\beta}-4z_2z_3\frac{\alpha^{n}-\beta^{n}}{\alpha-\beta}.
\end{equation}
Combining (\ref{L.12}) and the fact that $P_n(\alpha+\beta, \alpha \beta)$ is a homogeneous polynomial of degree of $n$, we get (\ref{L.13}).
\end{proof}

\vspace{0.5cm}
Back to our study of $\phi^{-1}(E)$,  recall that $z^{(n)}=Q^n(z)$. By induction, it is not hard to check that
\[ F^{n}(z)=\prod_{i=0}^{n-1} (z_0^{(i)}-z_1^{(i)})^{2^{n-i}}\cdot Q^n(z). \]
Combining (\ref{z_n}) and the fact $H_n=G_{n-1}$, we have $F^n(z)\in X$ if and only if
\begin{align*}
  0=\prod_{i=0}^n (z_0^{(i)}-z_1^{(i)})^{2^{n-i}}=\prod_{i=0}^n (\frac{G_i}{G_{i-1}})^{2^{n-i}}=\prod_{i=0}^{n-1} G_i^{2^{n-1-i}}.
\end{align*}
Hence we have
\[  \phi^{-1}(E)=\overline{\bigcup_{n\geq 0}\{ z\in \mathbb{C}^4~|~G_n(z)=0 \} }.  \]
Let $\Gamma_n$ be the algebraic variety defined by $G_n$, that is,
\begin{equation}
  \Gamma_n:= \{z\in\mathbb{C}^4 ~  |~ (z_0-z_1)U_n(\tau(z))=\sqrt{4z_2z_3}U_{n-1}(\tau(z))~\},
\end{equation}
where $\tau(z)=\frac{z_0+z_1}{2\sqrt{4z_2z_3}}$, and let $\Gamma$ denote the closure of $\bigcup \Gamma_n$. For the variety $\Gamma_n$, there is a natural question:
\begin{question}
Is  $\Gamma_n$ irreducible for any $n$?
\end{question}

\vspace{0.5cm}

Next we want to have a clear characterization of the set $\Gamma$. It is natural to consider two cases: $\tau(z)\notin [-1,1]$ and $\tau(z)\in [-1,1]$(i.e. $\bigcup_{x\in [0,1]} \{~(z_0+z_1)^2-16xz_2z_3=0 ~\}$).\\

\textbf{Case 1: $\tau(z)\notin [-1,1].$ }

In this case, by Lemma \ref{5.1},
$$\lim_{n\rightarrow \infty} \frac{U_n(\tau(z))}{U_{n+1}(\tau(z))}=\tau(z)-\sqrt{\tau(z)^2-1},$$
it implies that $\Gamma_n$ converges to the critical variety
$$\frac{z_0-z_1}{\sqrt{4z_2z_3}}=\tau(z)-\sqrt{\tau(z)^2-1},$$
that is,
$$(z_0-z_1)z_1=2z_2z_3.$$

\textbf{Case2: $\tau(z)\in [-1,1].$ }

In this case, we will show that $ \{~\tau(z)\in [-1,1]~\} \subset \Gamma$. It is suffice to show that for almost all $x\in \mathds{R}$, the set $ \{\tau(z)=\cos x\}$ is contained $\Gamma$. In particular, we take $x$ as an arbitrary irrational multiple of $\pi$.
Fix $z_2, z_3$, namely $z_2=a, z_3=b$ for some constant $a,b$,  then $z\in \{ \tau(z)=\cos x\}$ if and only if $z_0+z_1=2\sqrt{4z_2z_3}\cos x$ (it is a complex line). For $y\in \mathbb{R}\setminus\{0\}$, define
$$\Gamma_{n,y}:=\Gamma_n \bigcap \{z_2=a, z_3=b, \tau(z)=\cos(x+iy) \}.$$
In fact, it is a single point satisfying $z_0+z_1=2\sqrt{4ab}\cos(x+iy)$ and
$$\frac{2(z_0-z_1)}{z_0+z_1}=\frac{\sin nw}{\cos w \sin(n+1)w}=1-\tan w \cot(n+1)w,$$
where $w=x+iy$.
\begin{lemma}
For any $\epsilon >0$, define the set
$$B_\epsilon:=\bigcup_{n\geq 0, 0<|y|<\epsilon }\Gamma_{n,y},$$
then the complex line $\{ z_0+z_1=2\sqrt{4z_2z_3}\cos x\}$ is contained in the closure of $B_\epsilon$, and hence $\{~\tau(z)\in [-1,1]~\} \subset \Gamma$.
\end{lemma}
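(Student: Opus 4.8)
The plan is to prove that every point of the complex line $\ell_x := \{z_2 = a,\ z_3 = b,\ z_0+z_1 = 2\sqrt{4ab}\cos x\}$ lies in $\overline{B_\epsilon}$. Since $\Gamma_{n,y}\subset\Gamma_n\subset\Gamma$ and $\Gamma$ is closed, this gives $\ell_x\subset\Gamma$; the concluding assertion $\{\tau(z)\in[-1,1]\}\subset\Gamma$ then follows by letting $x$ range over the irrational multiples of $\pi$ and using that $\{\cos x : x/\pi\notin\mathbb{Q}\}$ is dense in $[-1,1]$ together with the closedness of $\Gamma$. I parameterize $\ell_x$ by $s=z_0-z_1\in\mathbb{C}$. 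Writing $w=x+iy$ and using the explicit description of the single point $\Gamma_{n,y}$ recorded just above the lemma, its coordinate $z_0-z_1$ equals $\sqrt{4ab}\,(\cos w-\sin w\,\cot((n+1)w))$, while $z_0+z_1=2\sqrt{4ab}\cos w\to 2\sqrt{4ab}\cos x$ as $y\to0$.

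Thus, modulo factors that converge to the constants $\cos x$ and $\sin x$, the whole problem reduces to steering the single quantity $\cot((n+1)w)$: it suffices to show that, as $n$ ranges over $\mathbb{N}$ and $y\to0$ subject to $0<|y|<\epsilon$, the values $\cot((n+1)(x+iy))$ approach every $c_0$ in a dense subset of $\mathbb{C}$, because then $s=\sqrt{4ab}\,(\cos x-c_0\sin x)$ sweeps out a dense subset of $\ell_x$ (an affine bijection in $c_0$, as $\sin x\neq0$) and closedness supplies the rest.

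The heart of the argument is a two-parameter limit. Since $\cot$ has period $\pi$, the value $\cot((n+1)w)$ depends only on $\theta_n:=(n+1)x\bmod\pi$ and on $v_n:=(n+1)y$. Fix a target $c_0=\cot(\alpha+i\beta)$ with $\alpha\in(0,\pi)$ and $\beta\neq0$. By Weyl's equidistribution theorem, valid because $x/\pi$ is irrational, the sequence $\theta_n$ is dense in $[0,\pi)$, so I can choose $n_k\to\infty$ with $\theta_{n_k}\to\alpha$; I then set $y_k:=\beta/(n_k+1)$, which forces $v_{n_k}=\beta$ exactly while $y_k\to0$ (so $0<|y_k|<\epsilon$ for large $k$, and $y_k\neq0$ since $\beta\neq0$). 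As $\alpha+i\beta$ avoids the poles of $\cot$, continuity yields $\cot((n_k+1)w_k)=\cot(\theta_{n_k}+i\beta)\to\cot(\alpha+i\beta)=c_0$, whence the points $\Gamma_{n_k,y_k}\subset B_\epsilon$ converge to the point of $\ell_x$ with $z_0-z_1=\sqrt{4ab}(\cos x-c_0\sin x)$. Finally, $\cot$ is non-constant holomorphic on the open set $\{0<\mathrm{Re}\,\zeta<\pi,\ \mathrm{Im}\,\zeta\neq0\}$ and maps it onto a dense subset of $\mathbb{C}$ (its complement lying in $\mathbb{R}\cup\{it:|t|\geq1\}$), so the realizable $c_0$, and with them the values $s$, are dense in $\mathbb{C}$, completing the proof of $\ell_x\subset\overline{B_\epsilon}$.

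I expect the main obstacle to be precisely this coordination of the two limits $n\to\infty$ and $y\to0$: equidistribution controls only the real part $\theta_n$, while naively sending $y\to0$ for fixed $n$ collapses $\cot((n+1)w)$ to a single value and loses all freedom. The rescaling $y_k=\beta/(n_k+1)$ is the device that decouples the imaginary part $v_n$ from the limit $y_k\to0$. The one remaining delicacy is the boundary case where the target $c_0$ (equivalently $s$) is real, forcing $\beta=0$, which is disallowed by $y\neq0$; this is absorbed by the density argument, approaching such points from within the locus $\beta\neq0$.
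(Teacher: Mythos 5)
Your proposal is correct and follows essentially the same route as the paper: both arguments use the density of $\{nx \bmod \pi\}$ (from $x/\pi$ irrational) together with the freedom to choose $y$ depending on $n$ to make the values $\cot((n+1)(x+iy))$ dense in $\mathbb{C}$, and then transfer this to density in the complex line via the affine relation $z_0-z_1=\sqrt{4ab}\,(\cos w-\sin w\,\cot((n+1)w))$. Your explicit rescaling $y_k=\beta/(n_k+1)$ and the check that $\cot$ maps the strip onto a dense subset of $\mathbb{C}$ merely spell out the step the paper compresses into ``as $y$ is independent of $n$, the set $\{\cot(nx+iny)\}$ is dense in $\mathbb{C}$.''
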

\begin{proof}
Since $x$ is an arbitrary irrational multiple of $\pi$, hence the numbers  $\{ nx\mod ~\pi\}$ is dense in $[0, \pi]$, as $y$ is independent of $n$, hence the set $\{\cot(nx+iny), n\geq 0, 0<|y|<\epsilon\}$ is dense in $\mathds{C}$. That means we can chose a point sequence $\{(z_0^{n,y}, z_1^{n,y}, a,b )\in \Gamma_{n,y}\} $ such that $z_0^{n,y}+ z_1^{n,y}$ converge to $z_0+z_1$, and $\frac{z_0^{n,y}- z_1^{n,y}}{z_0^{n,y}+ z_1^{n,y}}$ converge to $\frac{z_0-z_1}{z_0+z_1}$, that is, any point in the complex line can be approximated by the point sequence of $B_\epsilon$, hence the proof is complete.
\end{proof}

\vspace{0.5cm}
By above discussions, we have
\begin{theorem}\label{indet}
\[       \phi^{-1}(E)=\Gamma=\{~(z_0-z_1)z_1=2z_2z_3~\}\bigcup \left( \bigcup_{x\in [0,1]} \{~(z_0+z_1)^2-16xz_2z_3=0 ~\}\right) \bigcup \left( \bigcup_{n=1}^\infty \Gamma_n\right)\]
and
\[ L \cup \Gamma \subset P(D_\pi), \]
 where $L$ is the hyperplane $\{z\in \mathbb{C}^4 : z_0+z_1+2z_2+2z_3=0\}$.
\end{theorem}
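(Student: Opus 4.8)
The plan is to read off the explicit form of $\Gamma=\phi^{-1}(E)$ from the case analysis already prepared, and then to obtain the inclusion into $P(D_\pi)$ directly from the determinant recursion. The first equality $\phi^{-1}(E)=\Gamma$ is in fact already in hand: the computation preceding the statement gives $\phi^{-1}(E)=\overline{\bigcup_{n\geq0}\{z\in\mathbb{C}^4:G_n(z)=0\}}$, and by Lemma \ref{5.2} each zero locus $\{G_n=0\}$ is exactly the curve $\Gamma_n$ (after cancelling the factor $(\sqrt{4z_2z_3})^n$, which only has to be checked away from $\{z_2z_3=0\}$, where $G_n=0$ can be read off directly). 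Hence $\phi^{-1}(E)=\overline{\bigcup_n\Gamma_n}=\Gamma$, and the whole burden of the first assertion is to evaluate this closure explicitly.

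For that I would decompose $\Gamma=\overline{\bigcup_n\Gamma_n}$ according to whether $\tau(z)\in[-1,1]$. The inclusion $\bigcup_n\Gamma_n\subseteq\Gamma$ is automatic and contributes the curves $\Gamma_n$. To locate the accumulation points, take $z^{(k)}\to z$ with $z^{(k)}\in\Gamma_{n_k}$: if the $n_k$ stay bounded, then after passing to a subsequence $z\in\Gamma_n$ for a fixed $n$; if $n_k\to\infty$, rewrite the equation of $\Gamma_{n_k}$ as $(z_0^{(k)}-z_1^{(k)})/\sqrt{4z_2^{(k)}z_3^{(k)}}=U_{n_k-1}(\tau^{(k)})/U_{n_k}(\tau^{(k)})$ and pass to the limit. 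When $\tau(z)\notin[-1,1]$, Lemma \ref{5.1}, applied with the convergence $U_{n-1}/U_n\to w-\sqrt{w^2-1}$ locally uniform on $\hat{\mathbb{C}}\setminus[-1,1]$, forces $z$ onto the critical variety $\tfrac{z_0-z_1}{\sqrt{4z_2z_3}}=\tau(z)-\sqrt{\tau(z)^2-1}$, which a one-line computation turns into $(z_0-z_1)z_1=2z_2z_3$; when $\tau(z)\in[-1,1]$, the point $z$ lies in $\bigcup_{x\in[0,1]}\{(z_0+z_1)^2=16xz_2z_3\}$ by the very definition of that slab.

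For the reverse inclusions I would use the two cases already established: Case 1 exhibits the critical variety as the limit of the $\Gamma_n$, and the preceding density lemma (the $B_\epsilon$ argument) gives $\{\tau(z)\in[-1,1]\}\subseteq\Gamma$; writing $\tau(z)=\cos\theta$ with $\theta\in\mathbb{R}$ shows that $\tau(z)\in[-1,1]$ is equivalent to $(z_0+z_1)^2=16z_2z_3\cos^2\theta$, i.e.\ to membership in the slab with $x=\cos^2\theta\in[0,1]$, identifying that set with the second piece. Assembling the three contributions yields the stated description of $\Gamma$. The second assertion $L\cup\Gamma\subseteq P(D_\pi)$ is then immediate: the determinant recursion gives $P(D_\pi)\supseteq\overline{\bigcup_{n\geq0}F^{-n}(L)}$, and the earlier lemma identifies $\overline{\bigcup_n F^{-n}(L)}=L\cup\phi^{-1}(E)=L\cup\Gamma$.

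I expect the reverse inclusion $\Gamma\subseteq$ (three-fold union) to be the main obstacle, essentially because of the bookkeeping at the degenerate loci. One must make the convergence $U_{n-1}/U_n\to w-\sqrt{w^2-1}$ uniform on compacta of $\hat{\mathbb{C}}\setminus[-1,1]$ in order to pass to the limit along sequences whose $\tau$-values move, and one must separately handle $\{z_2z_3=0\}$, $\{z_0+z_1=0\}$ and $\{z_0-z_1=0\}$, where $\tau$ is undefined and the Chebyshev normalisation of Lemma \ref{5.2} degenerates; in particular the low-index locus $\Gamma_0=\{z_0=z_1\}$ has to be reconciled with the three listed pieces.
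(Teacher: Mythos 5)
Your proposal matches the paper's own argument essentially verbatim: the paper proves this theorem simply "by the above discussions," namely the identification $\phi^{-1}(E)=\overline{\bigcup_n\{G_n=0\}}$, the Case 1/Case 2 split on $\tau(z)\notin[-1,1]$ versus $\tau(z)\in[-1,1]$ using Lemma \ref{5.1} and the $B_\epsilon$ density lemma respectively, and the earlier lemma $\overline{\bigcup_n F^{-n}(L)}=L\cup\phi^{-1}(E)\subset P(D_\pi)$. Your closing remarks about uniformity of $U_{n-1}/U_n\to w-\sqrt{w^2-1}$ on compacta and the bookkeeping at the degenerate loci $\{z_2z_3=0\}$, $\{z_0\pm z_1=0\}$ identify real points the paper leaves implicit, but they do not change the route.
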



\subsection{The dynamical properties of $Q$}
Regard $Q$ as a rational map on $\mathbb{P}^3$ defined by
$$Q(z)=[~z_0+z_1-\frac{2z_2z_3}{z_0-z_1}~:~ \frac{2z_2z_3}{z_0-z_1}~:~ z_2~:~z_3~].$$
Note that by (\ref{4.7}) we can write
$$Q^n(z)=[~\frac{z_0+z_1}{2}+\frac{z_0^{(n)}-z_1^{(n)}}{2}~:~ \frac{z_0+z_1}{2}-\frac{z_0^{(n)}-z_1^{(n)}}{2}~:~ z_2~:~z_3~],$$
hence the dynamic property of $Q$ is determined by $z_0^{(n)}-z_1^{(n)}$, i.e. $\frac{G_n}{G_{n-1}}$ by (\ref{L.8}) and  (\ref{z_n}). Recall that
$$G_n(z_0,z_1)=(z_0-z_1)(\sqrt{4z_2z_3})^n U_n(\tau(z))-(\sqrt{4z_2z_3})^{n+1} U_{n-1}(\tau(z)),$$
where $\tau(z)=\frac{z_0+z_1}{2\sqrt{4z_2z_3}}$,  so we have
\begin{equation*}
  \frac{G_{n+1}}{G_n}=\frac{(\sqrt{4z_2z_3})^{n+2} U_{n+1}\cdot  \left(\frac{z_0-z_1}{\sqrt{4z_2z_3}}-\frac{U_n}{U_{n+1}}\right)}{(\sqrt{4z_2z_3})^{n+1} U_{n} \cdot \left(\frac{z_0-z_1}{\sqrt{4z_2z_3}}-\frac{U_{n-1}}{U_{n}}\right)}.
\end{equation*}
For $\tau(z)\notin [-1,1]$, by Lemma \ref{5.1},  $\frac{G_{n+1}}{G_n}$ converge normal to
$$\sqrt{4z_2z_3})(\tau(z)+\sqrt{\tau(z)^2-1}) =\frac{1}{2}(z_0+z_1+\sqrt{(z_0+z_1)^2-16z_2z_3}),$$
and since $\{\tau(z)\in [-1,1]\}$ is contained in the extended indeterminacy set , we have
\begin{theorem}
For the rational map $Q(z)=[~z_0+z_1-\frac{2z_2z_3}{z_0-z_1}~:~ \frac{2z_2z_3}{z_0-z_1}~:~ z_2~:~z_3~]$,
\[ \mathcal{J}(Q)=E, ~~~\phi(L)\cup E \subset p(D_\pi).  \]
\end{theorem}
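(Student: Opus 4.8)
The plan is to establish the final theorem in two independent parts: the dynamical identity $\mathcal{J}(Q)=E$, and the spectral inclusion $\phi(L)\cup E\subseteq p(D_\pi)$. For the inclusion, I would simply combine Theorem~\ref{indet}, which already gives $L\cup\Gamma\subset P(D_\pi)$ with $\phi^{-1}(E)=\Gamma$, and project down via $\phi$ to obtain $\phi(L)\cup E\subset p(D_\pi)$; since $E$ is by construction contained in the Julia set of any rational map (the extended indeterminacy set always lies in $\mathcal{J}(Q)$, as noted after Definition~\ref{2.6}), the containment $E\subset\mathcal{J}(Q)$ is immediate. So the substance is the reverse containment $\mathcal{J}(Q)\subseteq E$, which amounts to showing that every point of $\mathbb{P}^3\setminus E$ is a Fatou point.

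The key reduction is the observation already recorded above: writing $Q^n(z)=[\tfrac{z_0+z_1}{2}+\tfrac{z_0^{(n)}-z_1^{(n)}}{2}:\tfrac{z_0+z_1}{2}-\tfrac{z_0^{(n)}-z_1^{(n)}}{2}:z_2:z_3]$, the entire $n$-dependence of the orbit is carried by the scalar $z_0^{(n)}-z_1^{(n)}=G_n/G_{n-1}$. Since $z_2,z_3$ and $z_0+z_1$ are preserved by $Q$, controlling the family $\{Q^n\}$ reduces to controlling the single ratio $G_{n+1}/G_n$ as a function on $\mathbb{P}^3$. First I would take a point $z$ with $\tau(z)\notin[-1,1]$ and apply Lemma~\ref{5.1} to the quotient
\begin{equation*}
  \frac{G_{n+1}}{G_n}=\frac{(\sqrt{4z_2z_3})^{n+2} U_{n+1}\cdot\left(\frac{z_0-z_1}{\sqrt{4z_2z_3}}-\frac{U_n}{U_{n+1}}\right)}{(\sqrt{4z_2z_3})^{n+1} U_{n}\cdot\left(\frac{z_0-z_1}{\sqrt{4z_2z_3}}-\frac{U_{n-1}}{U_{n}}\right)},
\end{equation*}
concluding that on any compact subset of $\{\tau(z)\notin[-1,1]\}$ the ratio converges uniformly to $\tfrac12\bigl(z_0+z_1+\sqrt{(z_0+z_1)^2-16z_2z_3}\bigr)$. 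This forces $Q^n$ to converge locally uniformly to a holomorphic limit map there, so $\{Q^n\}$ is a normal family on $\{\tau(z)\notin[-1,1]\}$, giving $\{\tau(z)\notin[-1,1]\}\subseteq\mathcal{F}(Q)$.

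It then remains to identify $\mathbb{P}^3\setminus E$ with $\{\tau(z)\notin[-1,1]\}$ up to the Julia set. By Theorem~\ref{indet}, $\phi^{-1}(E)=\Gamma$ contains the full locus $\{\tau(z)\in[-1,1]\}=\bigcup_{x\in[0,1]}\{(z_0+z_1)^2-16xz_2z_3=0\}$ together with the limiting variety $(z_0-z_1)z_1=2z_2z_3$ and all the curves $\Gamma_n$. Hence $\{\tau(z)\in[-1,1]\}\subset E$, and combined with the normality statement this yields $\mathcal{F}(Q)\supseteq\mathbb{P}^3\setminus E$, equivalently $\mathcal{J}(Q)\subseteq E$; together with $E\subseteq\mathcal{J}(Q)$ this gives $\mathcal{J}(Q)=E$. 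I expect the main obstacle to be the subtlety in the convergence argument near the boundary $\tau(z)\in\{-1,1\}$ and near the indeterminacy locus: one must verify that the denominators $G_n$ do not vanish on the relevant open set and that the normal-family convergence is genuinely locally uniform in all of $z_0,z_1,z_2,z_3$ rather than only along the scalar parameter $\tau$, so that the diameter condition of Definition~\ref{2.6} (which is phrased relative to $E$) is actually met. Care is needed because $Q$ is only a rational, not polynomial, map, so the behaviour of orbits that approach $E$ must be excised, exactly as the diameter condition $\operatorname{diam} H^n(U\setminus E)<\epsilon$ allows.
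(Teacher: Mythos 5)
Your proposal is correct and follows essentially the same route as the paper: the reduction of the dynamics to the scalar ratio $G_{n+1}/G_n$, the application of Lemma~\ref{5.1} to get normal convergence of $\{Q^n\}$ on $\{\tau(z)\notin[-1,1]\}$ away from $E$, the observation that $\{\tau(z)\in[-1,1]\}\subseteq E$ to conclude $\mathcal{J}(Q)=E$, and the projection of Theorem~\ref{indet} to obtain $\phi(L)\cup E\subseteq p(D_\pi)$. The subtleties you flag (non-vanishing of the $G_n$ off $E$ and the diameter condition relative to $E$) are exactly the points the paper leaves implicit, so your treatment is if anything slightly more careful.
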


In conclusion, we complete the proof of Theorem B, and it is worth noting that $\phi(L)$ is not contained in the extended indeterminacy set $E$. In fact, let $z\in \{z_2=0\}$, by (\ref{L.12}) and (\ref{L.13}), in this case, $G_n(z)=(z_0-z_1)(z_0+z_1)^n$. Then the point $[1:3:0:-2]$ is in $\phi(L)$, but by Theorem \ref{indet}, is not contained in $E$. Hence for the pencil $D_\pi$, the Julia set $\mathcal{J}(Q)$ is strictly contained in $p(D_\pi)$. But maybe the only difference is just $L$, which is dependent of the selection of the generated set of group.
\begin{conjecture}
\[ \phi(L) \cup \mathcal{J}(Q)=p(D_\pi). \]
\end{conjecture}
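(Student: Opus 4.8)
The forward inclusion $\phi(L)\cup\mathcal{J}(Q)\subseteq p(D_\pi)$ is already in hand: the preceding theorem gives $\phi(L)\cup E\subseteq p(D_\pi)$, and since $\mathcal{J}(Q)=E$ the two left-hand sides coincide. Thus the whole content of the claimed equality is the reverse inclusion $p(D_\pi)\subseteq\phi(L)\cup E$; equivalently, I must show that whenever $z\in\mathbb{C}^4$ satisfies $z\notin L\cup\phi^{-1}(E)$, the operator $D_\pi(z)$ is invertible on $\mathcal{H}$. By the identity $\overline{\bigcup_{n\ge 0}F^{-n}(L)}=L\cup\phi^{-1}(E)$ proved above, such a $z$ also satisfies $\det D_{\pi_n}(z)\neq 0$ for every $n$, so each finite truncation $D_{\pi_n}(z)$ is invertible; the task is to promote this to invertibility of the limit operator.

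The first step is to upgrade the determinant recursion to an \emph{exact} operator recursion. Performing the same block column and row operations as in the determinant computation writes $D_\pi(z)=P_{\mathrm{row}}^{-1}\,M\,P_{\mathrm{col}}^{-1}$, where $P_{\mathrm{row}},P_{\mathrm{col}}$ are bounded invertible elementary block operators and $M$ has scalar $(1,1)$ block $2(z_0-z_1)I$. Using $c=a^{-1}b=b^{-1}a$, so that $(a^{-1}-b^{-1})(a-b)=2(I-c)$, a direct expansion identifies the Schur complement of $M$ as $\tfrac{1}{2(z_0-z_1)}D_\pi(F(z))$, with the same infinite-dimensional operators $a,b,a^{-1},b^{-1},c$. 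Hence the operator-level recursion
\begin{equation*}
 z_0\neq z_1:\qquad D_\pi(z)\ \text{invertible}\iff D_\pi(F(z))\ \text{invertible}. \tag{$\star$}
\end{equation*}

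Because $\{z_0=z_1\}=\{G_0=0\}\subseteq\phi^{-1}(E)$, any $z\notin\phi^{-1}(E)$ has $G_n(z)\neq 0$ for all $n$, so $z_0^{(n)}-z_1^{(n)}=G_n/G_{n-1}\neq 0$ and the entire forward orbit avoids $\{z_0=z_1\}$; iterating $(\star)$ and using the homogeneity $F^n(z)=\big(\prod_{i<n}(z_0^{(i)}-z_1^{(i)})^{2^{n-i}}\big)Q^n(z)$ gives that $D_\pi(z)$ is invertible if and only if $D_\pi(Q^n(z))$ is invertible, for every $n$. Since $z\notin\phi^{-1}(E)$ also forces $\tau(z)\notin[-1,1]$, the orbit $Q^n(z)$ converges to a fixed point $z^*$ of $Q$.

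Here lies the main obstacle. One would like to pass to the limit in $(\star)$, but $z^*$ sits on the critical variety: writing $\alpha,\beta$ for the two eigenvalues, the attracting fixed point has $z_0^*-z_1^*=\alpha$ and $z_1^*=\beta/2$, whence $(z_0^*-z_1^*)z_1^*=\alpha\beta/2=2z_2z_3$, so $z^*\in\{(z_0-z_1)z_1=2z_2z_3\}\subseteq\phi^{-1}(E)$ and $D_\pi(z^*)$ is \emph{not} invertible. Thus the recursion alone is inconclusive, and the genuine content is a \emph{uniform} quantitative estimate. The plan is to combine $(\star)$ with the truncations: since $D_{\pi_n}(z)\to D_\pi(z)$ strongly together with adjoints (the $\mathcal{H}_n$ are nested reducing subspaces with dense union) and each $D_{\pi_n}(z)$ is invertible, it suffices to prove $\sup_n\|D_{\pi_n}(z)^{-1}\|<\infty$. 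I would attack this by unwinding the block-inverse (Schur) formula recursively, using the nonzero scalar seed $D_{\pi_0}(Q^n(z))=z_0+z_1+2z_2+2z_3$ --- which is nonzero precisely because $z\notin L$ --- together with the geometric contraction rate $|\beta/\alpha|<1$ of $Q^n(z)\to z^*$ to control the resulting telescoping product. The hard part, and the reason the equality is still only conjectured, is to show that this recursion produces a bound uniform in $n$ even though the orbit accumulates at the degenerate point $z^*\in E$; equivalently, to establish the spectral-approximation identity $\sigma(\pi(m))=\overline{\bigcup_n\sigma(\pi_n(m))}$ for the non-self-adjoint pencil $m=D_\pi(z)$, where the usual self-adjoint argument is unavailable. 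The emergence of the seed $z_0+z_1+2z_2+2z_3$ is exactly what singles out $\phi(L)$, in agreement with the remark that the only possible discrepancy between $p(D_\pi)$ and $\mathcal{J}(Q)$ is $\phi(L)$.
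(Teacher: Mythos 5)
You should be clear at the outset that the statement you were asked to prove is presented in the paper as a \emph{conjecture}, not a theorem: the paper proves only the inclusion $\phi(L)\cup\mathcal{J}(Q)\subseteq p(D_\pi)$ (which your first paragraph reproduces correctly, via Theorem \ref{indet} and $\mathcal{J}(Q)=E$) and, by exhibiting the point $[1:3:0:-2]\in\phi(L)\setminus E$, that $\mathcal{J}(Q)$ is \emph{strictly} contained in $p(D_\pi)$. The reverse inclusion $p(D_\pi)\subseteq\phi(L)\cup\mathcal{J}(Q)$ is left open there, so there is no proof in the paper to compare against, and your attempt must stand on its own.

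Judged on its own, your preparatory steps are sound: the block row and column operations do upgrade the determinant recursion to the operator equivalence $(\star)$, since after the operations the $(1,1)$ entry $2(z_0-z_1)I$ is scalar and the Schur-complement factorization is legitimate on $\mathcal{H}$, using $(a^{-1}-b^{-1})(a-b)=2(1-c)$; and your observation that the orbit limit $z^*$ satisfies $(z_0^*-z_1^*)z_1^*=\alpha\beta/2=2z_2z_3$, hence lies on the critical variety inside $\phi^{-1}(E)$, correctly explains why naive passage to the limit in $(\star)$ cannot work. But the decisive step --- the uniform bound $\sup_n\|D_{\pi_n}(z)^{-1}\|<\infty$ for $z\notin L\cup\phi^{-1}(E)$, equivalently the spectral-approximation identity $\sigma(\pi(m))=\overline{\bigcup_{n}\sigma(\pi_n(m))}$ for the non-self-adjoint element $m=D_\pi(z)$ --- is only announced, never carried out, and you concede as much. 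This is exactly the obstruction the paper flags in Section 2.4, where that identity is guaranteed only for self-adjoint $\pi(m)$. Nonvanishing of the seed $D_{\pi_0}(Q^n(z))=z_0+z_1+2z_2+2z_3$ controls determinants, but inverse norms of the truncations are not controlled by determinants, and since $Q^n(z)\to z^*\in E$ the Schur blocks degenerate along the orbit, so the telescoped product of recursive estimates has no a priori reason to remain bounded in $n$. What you have, therefore, is a correct and honest \emph{reduction} of the conjecture to a uniform resolvent estimate --- a plausible roadmap --- but not a proof; as written, the argument does not establish the statement, and indeed the statement remains open in the paper.
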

\begin{acknowledgement}
 The authors would like to thank the referee for his/her careful reading of the paper and helpful suggestions.
\end{acknowledgement}

\begin{bibdiv}

  \begin{biblist}

  \bib{BPY}{article}{
   author={Bannon, J. P.},
   author={Cade, P.},
   author={Yang, R.},
   title={On the spectrum of Banach algebra-valued entire functions},
   journal={Illinois J. Math.},
   volume={55},
   date={2011},
   number={4},
   pages={1455--1465 (2013)},
   issn={0019-2082},
   review={\MR{3082878}},
}
{\color{blue} http://projecteuclid.org/euclid.ijm/1373636693}

\bib{F.B}{book}{
   author={Beardon, Alan F.},
   title={Iteration of rational functions},
   series={Graduate Texts in Mathematics},
   volume={132},
   note={Complex analytic dynamical systems},
   publisher={Springer-Verlag, New York},
   date={1991},
   pages={xvi+280},
   isbn={0-387-97589-6},
   review={\MR{1128089}},
   doi={10.1007/978-1-4612-4422-6},
}
{\color{blue} https://doi.org/10.1007/978-1-4612-4422-6}

\bib{PY}{article}{
   author={Cade, Patrick},
   author={Yang, Rongwei},
   title={Projective spectrum and cyclic cohomology},
   journal={J. Funct. Anal.},
   volume={265},
   date={2013},
   number={9},
   pages={1916--1933},
   issn={0022-1236},
   review={\MR{3084492}},
   doi={10.1016/j.jfa.2013.07.010},
}
{\color{blue} https://doi.org/10.1016/j.jfa.2013.07.010}

\bib{CSZ}{article}{
   author={Chagouel, Isaak},
   author={Stessin, Michael},
   author={Zhu, Kehe},
   title={Geometric spectral theory for compact operators},
   journal={Trans. Amer. Math. Soc.},
   volume={368},
   date={2016},
   number={3},
   pages={1559--1582},
   issn={0002-9947},
   review={\MR{3449218}},
   doi={10.1090/tran/6588},
}
{\color{blue} https://doi.org/10.1090/tran/6588}

\bib{DY1}{article}{
   author={Douglas, Ronald G.},
   author={Yang, Rongwei},
   title={Hermitian geometry on resolvent set},
   conference={
      title={Operator theory, operator algebras, and matrix theory},
   },
   book={
      series={Oper. Theory Adv. Appl.},
      volume={267},
      publisher={Birkh\"{a}user/Springer, Cham},
   },
   date={2018},
   pages={167--183},
   review={\MR{3837636}},
}

\bib{DG}{article}{
   author={Dudko, Artem},
   author={Grigorchuk, Rostislav},
   title={On irreducibility and disjointness of Koopman and quasi-regular
   representations of weakly branch groups},
   conference={
      title={Modern theory of dynamical systems},
   },
   book={
      series={Contemp. Math.},
      volume={692},
      publisher={Amer. Math. Soc., Providence, RI},
   },
   date={2017},
   pages={51--66},
   review={\MR{3666066}},
   doi={10.1090/conm/692},
}
{\color{blue} https://doi.org/10.1090/conm/692}

\bib{GY}{article}{
   author={Goldberg, Bryan},
   author={Yang, Rongwei},
   title={Hermitian metric and the infinite dihedral group},
   note={Published in Russian in Tr. Mat. Inst. Steklova {\bf 304} (2019),
   149--158},
   journal={Proc. Steklov Inst. Math.},
   volume={304},
   date={2019},
   number={1},
   pages={136--145},
   issn={0081-5438},
   review={\MR{3973318}},
   doi={10.1134/s0081543819010097},
}
{\color{blue} https://doi.org/10.1134/s0081543819010097}

\bib{BY}{article}{
   author={Goldberg, Bryan},
   author={Yang, Rongwei},
   title={Self-similarity and spectral dynamics},
   journal={J. Operator Theory},
   volume={87},
   date={2022},
   number={2},
   pages={355--388},
   issn={0379-4024},
   review={\MR{4396939}},
   doi={10.7900/jot},
}
{\color{blue} https://doi.org/10.7900/jot}

\bib{Gr}{article}{
   author={Grigor\v{c}uk, R. I.},
   title={On Burnside's problem on periodic groups},
   language={Russian},
   journal={Funktsional. Anal. i Prilozhen.},
   volume={14},
   date={1980},
   number={1},
   pages={53--54},
   issn={0374-1990},
   review={\MR{565099}},
}

\bib{GN}{article}{
   author={Grigorchuk, Rostislav},
   author={Nekrashevych, Volodymyr},
   title={Self-similar groups, operator algebras and Schur complement},
   journal={J. Mod. Dyn.},
   volume={1},
   date={2007},
   number={3},
   pages={323--370},
   issn={1930-5311},
   review={\MR{2318495}},
   doi={10.3934/jmd.2007.1.323},
}
{\color{blue} https://doi.org/10.3934/jmd.2007.1.323}

\bib{GNZ}{article}{
   author={Grigorchuk, Rostislav},
   author={Nekrashevych, Volodymyr},
   author={\v{S}uni\'{c}, Zoran},
   title={From self-similar groups to self-similar sets and spectra},
   conference={
      title={Fractal geometry and stochastics V},
   },
   book={
      series={Progr. Probab.},
      volume={70},
      publisher={Birkh\"{a}user/Springer, Cham},
   },
   date={2015},
   pages={175--207},
   review={\MR{3558157}},
   doi={10.1007/978-3-319-18660-3$\underline{~}$11},
}
{\color{blue} https://doi.org/10.1007/978-3-319-18660-3$\underline{~}$11}

\bib{GNS}{article}{
   author={Grigorchuk, R. I.},
   author={Nekrashevich, V. V.},
   author={Sushchanski\u{\i}, V. I.},
   title={Automata, dynamical systems, and groups},
   language={Russian, with Russian summary},
   journal={Tr. Mat. Inst. Steklova},
   volume={231},
   date={2000},
   number={Din. Sist., Avtom. i Beskon. Gruppy},
   pages={134--214},
   issn={0371-9685},
   translation={
      journal={Proc. Steklov Inst. Math.},
      date={2000},
      number={4(231)},
      pages={128--203},
      issn={0081-5438},
   },
   review={\MR{1841755}},
}

\bib{GS}{article}{
   author={Grigorchuk, Rostislav},
   author={\v{S}uni\'{c}, Zoran},
   title={Schreier spectrum of the Hanoi Towers group on three pegs},
   conference={
      title={Analysis on graphs and its applications},
   },
   book={
      series={Proc. Sympos. Pure Math.},
      volume={77},
      publisher={Amer. Math. Soc., Providence, RI},
   },
   date={2008},
   pages={183--198},
   review={\MR{2459869}},
   doi={10.1090/pspum/077/2459869},
}
{\color{blue} https://doi.org/10.1090/pspum/077/2459869}

\bib{GBS}{article}{
   author={Grigorchuk, Rostislav},
   author={Simanek, Brian},
   title={Spectra of Cayley graphs of the lamplighter group and random
   Schr\"{o}dinger operators},
   journal={Trans. Amer. Math. Soc.},
   volume={374},
   date={2021},
   number={4},
   pages={2421--2445},
   issn={0002-9947},
   review={\MR{4223021}},
   doi={10.1090/tran/8156},
}
{\color{blue} https://doi.org/10.1090/tran/8156}

\bib{R.GY}{article}{
   author={Grigorchuk, R. I.},
   author={Yang, R.},
   title={Joint spectrum and the infinite dihedral group},
   language={Russian, with Russian summary},
   note={English version published in Proc. Steklov Inst. Math. {\bf 297}
   (2017), no. 1, 145--178},
   journal={Tr. Mat. Inst. Steklova},
   volume={297},
   date={2017},
   number={Poryadok i Khaos v Dinamicheskikh Sistemakh},
   pages={165--200},
   issn={0371-9685},
   isbn={5-7846-0142-3},
   isbn={978-5-7846-0142-1},
   review={\MR{3695412}},
   doi={10.1134/S0371968517020091},
}
{\color{blue} https://doi.org/10.1134/S0371968517020091}

\bib{GZ}{article}{
   author={Grigorchuk, Rostislav I.},
   author={\.{Z}uk, Andrzej},
   title={The lamplighter group as a group generated by a 2-state automaton,
   and its spectrum},
   journal={Geom. Dedicata},
   volume={87},
   date={2001},
   number={1-3},
   pages={209--244},
   issn={0046-5755},
   review={\MR{1866850}},
   doi={10.1023/A:1012061801279},
}
{\color{blue} https://doi.org/10.1023/A:1012061801279}

\bib{HWY}{article}{
   author={He, Wei},
   author={Wang, Xiaofeng},
   author={Yang, Rongwei},
   title={Projective spectrum and kernel bundle. II},
   journal={J. Operator Theory},
   volume={78},
   date={2017},
   number={2},
   pages={417--433},
   issn={0379-4024},
   review={\MR{3725512}},
   doi={10.7900/jot},
}
{\color{blue} https://doi.org/10.7900/jot}

\bib{J.M}{article}{
   author={Mc Laughlin, J.},
   title={Combinatorial identities deriving from the $n$th power of a
   $2\times 2$ matrix},
   journal={Integers},
   volume={4},
   date={2004},
   pages={A19, 15},
   issn={1553-1732},
   review={\MR{2116004}},
}
{\color{blue} https://www.researchgate.net/publication/237302841}

\bib{V.N}{book}{
   author={Nekrashevych, Volodymyr},
   title={Self-similar groups},
   series={Mathematical Surveys and Monographs},
   volume={117},
   publisher={American Mathematical Society, Providence, RI},
   date={2005},
   pages={xii+231},
   isbn={0-8218-3831-8},
   review={\MR{2162164}},
   doi={10.1090/surv/117},
}
{\color{blue} https://doi.org/10.1090/surv/117}

\bib{B.S}{book}{
   author={Simon, Barry},
   title={Orthogonal polynomials on the unit circle. Part 2},
   series={American Mathematical Society Colloquium Publications},
   volume={54},
   note={Spectral theory},
   publisher={American Mathematical Society, Providence, RI},
   date={2005},
   pages={i--xxii and 467--1044},
   isbn={0-8218-3675-7},
   review={\MR{2105089}},
   doi={10.1090/coll/054.2/01},
}
{\color{blue} https://doi.org/10.1090/coll/054.2/01}

\bib{T.U}{article}{
   author={Ueda, Tetsuo},
   title={Fatou sets in complex dynamics on projective spaces},
   journal={J. Math. Soc. Japan},
   volume={46},
   date={1994},
   number={3},
   pages={545--555},
   issn={0025-5645},
   review={\MR{1276837}},
   doi={10.2969/jmsj/04630545},
}
{\color{blue} https://doi.org/10.2969/jmsj/04630545}

\bib{R.Yang}{article}{
   author={Yang, Rongwei},
   title={Projective spectrum in Banach algebras},
   journal={J. Topol. Anal.},
   volume={1},
   date={2009},
   number={3},
   pages={289--306},
   issn={1793-5253},
   review={\MR{2574027}},
   doi={10.1142/S1793525309000126},
}
{\color{blue} https://doi.org/10.1142/S1793525309000126}

\end{biblist}
\end{bibdiv}

%
%
%
%
%
%
%
%

\end{document}